\providecommand{\alaplus}{\genfrac{}{}{0pt}{}{}{+}} % Good looking continued fraction notation.
\providecommand{\aladots}{\genfrac{}{}{0pt}{}{}{\cdots}} % Good looking continued fraction notation.
\providecommand{\K}{\mathop{\mathchoice{\doK\LARGE}{\doK\Large}{\doK\small}{\doK\small}}}
\providecommand{\doK}[1]{\vcenter{#1\kern.2ex\hbox{\normalfont\text{K}}\kern.2ex}}
\providecommand{\Z}{\mathbb{Z}} % Integers.
\providecommand{\Q}{\mathbb{Q}} % Rationals.
\newtheorem{theorem}{Theorem}[section] % 1st argument is your name for it
\newtheorem{lemma}[theorem]{Lemma}     % 2nd argument is what is printed
\newtheorem{corollary}[theorem]{Corollary}
\begin{document}

\title{Rational approximations of the exponential function at rational points}
\author{Kalle Lepp\"al\"a, Tapani Matala-aho and Topi T\"orm\"a \\ 11J82 (primary), 11J70, 11A55 (secondary)}

\maketitle

\begin{abstract}
We give explicit and asymptotic lower bounds for the quantity $|e^{s/t}-M/N|$ by studying
a generalized continued fraction expansion of $e^{s/t}$. 
In cases $|s|\geq 3$ we improve existing results by extracting a large common factor from the numerators and the denominators 
of the convergents of that generalized continued fraction.
\end{abstract}

% \part{Use this type of header for very long papers only}
% use lowercase except for proper names

\section{Introduction} 

We will present both explicit and asymptotic irrationality measure results i.e. lower bounds 
for the quantity $|e^{s/t}-M/N|$ as a function of positive integer $N$, where $s/t$ is an arbitrary non-zero rational number.

As usual, the lower bounds are achieved by constructing sequences of high quality rational approximations to $e^{s/t}$.
Our approximations are based on the convergents of the generalized continued fraction expansion
\begin{align}\label{GENCONTFRACEXP}
e^{z}=1+\frac{2z}{2-z}\alaplus\frac{z^2}{6}\alaplus\frac{z^2}{10}\alaplus\frac{z^2}{14}\alaplus\aladots 
=1+\cfrac{2z}{2-z+\K_{n=1}^{\infty}\frac{z^2}{4n+2}}\,,
\end{align}
which are in fact the diagonal Pad\'e approximants of the exponential function \cite{perron},\cite{joth}, \cite{khov}.
Special attention will be paid for divisibility properties of the numerators and the denominators of the convergents of
\[
\K_{n=1}^\infty \frac{z^2}{4n+2}
\]
evaluated at $z = s/t$.
There appears an unexpected and fairly big common factor which is related to the prime number decomposition of $s$.
Thereby when $|s| \ge 3$, we improve the existing results considerably;
the comparison will be done in the next section.

Let $s\in\Z\setminus\{0\}$, $t\in\mathbb{Z}_{\ge 1}$ and $\gcd(s,t)=1$.
Denote the inverse of the function 
$y(z) = z \log z$, $z \geq 1/e$, as $z(y)$.
We define $z_0(y) = y$ and $z_n(y) = y/\log z_{n-1}(y)$, $n \in \mathbb{Z}_{\ge 1}$.
In the following we denote
\[
\alpha = \prod_{\substack{p \in \mathbb{P}\\ p \mid s}} p^{1/(p-1)}\,, \qquad \beta =  \sum_{\substack{p \in \mathbb{P}\setminus\{2\}\\ p \mid s}} 1 \, , \qquad
\gamma = \prod_{\substack{p \in \mathbb{P}\setminus\{2\}\\ p \mid s}} p\,, 
\]
\[
\sigma  =\frac{4t\alpha}{es^2}\,, \qquad \rho =
\begin{cases}
\tfrac{7}{3} \,, \quad &\sigma \ge 1\,, \\
5-2\log \sigma\,, \quad & \sigma < 1 \,,
\end{cases}
\qquad
\zeta(N) = \frac{z(\sigma \log N)}{\sigma} +\beta\,,
\]
\begin{align*}
Z(N)&  = \left(\frac{s^2}{\alpha^2}(\zeta(N)+1)\right)^\beta \\ & \qquad \qquad \times \left(
\frac{8\gamma^2 |s|(4t\zeta(N)+6t+s^2)\zeta(N)^\beta}{\alpha} 
+\frac{\gcd(2,s)\alpha^2\gamma}{N^2 s^2|e^{s/t}-1|}\right)
\,,
\end{align*}
\[
\eta  =\max\left\{\frac{\sqrt{e}|e^{s/t}-1|\gamma }{\sqrt2\sigma^{\beta}} - \frac{1}{2},\frac{e}{\sigma}+\beta\right\}\,, \qquad \varepsilon(N)=\frac{\log\log\log N}{\log\log N}\,.
\]

\begin{theorem}\label{GENERALTHEOREM}
Let $s\in\Z\setminus\{0\}$, $t\in\mathbb{Z}_{\ge 1}$ and $\gcd(s,t)=1$. 
Then
\[
1<\left|e^{s/t}-\frac{M}{N}\right|Z(N)\,
N^{2+2\log(|s|/\alpha)\,z(\sigma\log N)/(\sigma\log N)}
\]
for all 
$M\in\mathbb{Z}\setminus\{0\}, N \in \mathbb{Z}_{\ge N_1}$
with
\begin{equation}\label{RAJAN}
\log N_1 = \max\left\{(\eta-\beta) \log\left(\sigma(\eta-\beta)\right),\log\left(\frac{\gcd(2,s)}{|4s+2(s-2t)(e^{s/t}-1)|}\right)\right\}\,.
\end{equation}
\end{theorem}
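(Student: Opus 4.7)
The plan is to build a sequence of explicit integer approximants $\tilde P_n/\tilde Q_n$ to $e^{s/t}$ from the convergents of the continued fraction (\ref{GENCONTFRACEXP}), quantify their quality, and then convert this into a lower bound for $|e^{s/t}-M/N|$ by the usual dichotomy argument. The novelty lies in locating and factoring out an arithmetic common factor of size roughly $\alpha^{2n}$ from the numerators and denominators, which is what produces the exponent $2+2\log(|s|/\alpha)\,z(\sigma\log N)/(\sigma\log N)$ rather than the cruder bound one obtains without exploiting the prime structure of $s$.

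First I would write down the classical closed forms for the convergents $A_n(z)/B_n(z)$ of (\ref{GENCONTFRACEXP}); these are the diagonal Pad\'e approximants of $e^z$ and their coefficients are explicit rational numbers obtained from the three-term recurrence. Setting $z=s/t$ and multiplying through by $t^{2n}$ gives integer numerator $A_n^{\ast}$ and integer denominator $B_n^{\ast}$. I would then analyse, prime by prime, the $p$-adic valuations of the coefficients using Legendre's formula $v_p(k!)=(k-S_p(k))/(p-1)$. For each prime $p\mid s$ this produces a common factor of size roughly $p^{2n/(p-1)}$; the product over all such primes is precisely $\alpha^{2n}$. Dividing out yields reduced integer approximants $\tilde P_n,\tilde Q_n$, and Hermite's integral representation of the diagonal Pad\'e remainder gives quantitative two-sided bounds for $|\tilde Q_n|$ and $|e^{s/t}-\tilde P_n/\tilde Q_n|$, with all constants explicit in terms of $s,t,\alpha,\beta,\gamma$. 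These explicit estimates are exactly what gets collected into the factor $Z(N)$.

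Finally, given arbitrary $M\in\Z\setminus\{0\}$ and $N\geq N_1$, I would pick the unique $n$ with $\tilde Q_{n-1}\leq N<\tilde Q_n$ and apply
\[
\left|e^{s/t}-\frac{M}{N}\right|\geq\left|\frac{\tilde P_{n-1}}{\tilde Q_{n-1}}-\frac{M}{N}\right|-\left|e^{s/t}-\frac{\tilde P_{n-1}}{\tilde Q_{n-1}}\right|\geq\frac{1}{N\tilde Q_{n-1}}-\left|e^{s/t}-\frac{\tilde P_{n-1}}{\tilde Q_{n-1}}\right|,
\]
where the first estimate uses $|\tilde P_{n-1}/\tilde Q_{n-1}-M/N|\geq 1/(N\tilde Q_{n-1})$ whenever the two fractions differ. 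The exponent in the theorem comes from inverting $\tilde Q_n\approx N$: asymptotically this is of the form $n\log n\asymp\sigma\log N$, whose solution is $n\approx z(\sigma\log N)/\sigma$, and substituting back yields the claimed dependence on $\log N$. The threshold (\ref{RAJAN}) is chosen just large enough that the corresponding $n$ exceeds $\eta$, which ensures that the subtracted error term does not swamp $1/(N\tilde Q_{n-1})$, and that the degenerate coincidence $M/N = 1+2(s/t)/(2-s/t)$ is ruled out.

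The main obstacle is the prime-by-prime divisibility analysis that establishes the common factor $\alpha^{2n}$; once $\tilde Q_n$ has been identified, the asymptotic inversion via the function $z(\cdot)$ is standard, and the remaining work is careful bookkeeping of constants to arrive at the precise expression for $Z(N)$.
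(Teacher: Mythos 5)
Your overall architecture agrees with the paper up to the last step: the paper also works with the diagonal Pad\'e convergents, proves an explicit binomial-type closed form for the numerators and denominators $C_n^{\pm}=A_n+(2t\pm s)B_n$ of the convergents of $e^{s/t}$, and extracts the common divisor $D_n$ prime by prime via Legendre's formula (note, though, that the common factor of a \emph{single} convergent is of size about $\alpha^{n+1}$, each odd $p\mid s$ contributing $p^{v_p((n+1)!)}\approx p^{n/(p-1)}$, not $p^{2n/(p-1)}$; the exponent $2\log(|s|/\alpha)$ arises because two consecutive divisors $D_{\hat n}D_{\hat n+1}$ meet $s^{2(\hat n+1)}$ in the final estimate).

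The genuine gap is in your dichotomy. You pick $n$ by $\tilde Q_{n-1}\le N<\tilde Q_n$ and subtract the approximation error from $1/(N\tilde Q_{n-1})$. Write $\epsilon_{n}=|\tilde Q_{n}e^{s/t}-\tilde P_{n}|$; your subtraction is only useful when $N\epsilon_{n-1}\le 1/2$, i.e.\ when $\tilde Q_{n}\,\epsilon_{n-1}\le 1/2$. But after dividing out the common factors one has $\tilde Q_n\approx B_n/D_n$ and $\epsilon_{n-1}\approx s^{2n}/(D_{n-1}B_n)$, so $\tilde Q_n\,\epsilon_{n-1}\asymp (|s|/\alpha)^{2n}\to\infty$ precisely in the interesting regime $|s|\ge 3$ where $|s|>\alpha$: the reduced approximants are no longer of continued-fraction quality, the subtracted term can dominate, and your displayed lower bound can be negative. (A second symptom: if the argument worked it would give $|e^{s/t}-M/N|\gtrsim 1/N^2$, which is incompatible with the necessarily positive correction $2\log(|s|/\alpha)z(\sigma\log N)/(\sigma\log N)$ in the exponent. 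You also leave the coincidence case $M/N=\tilde P_{n-1}/\tilde Q_{n-1}$ untreated except for $n=1$, and that is exactly where the best approximations live.) The paper pivots on the size of the linear forms instead of the denominators: it takes the \emph{largest} $\hat n$ with $N|L_{\hat n}|\ge 1/2$ (so $N|L_k|<1/2$ for $k>\hat n$), uses the determinant identity $J_nH_{n+1}-J_{n+1}H_n=(-1)^n2s^{2n+3}/(D_nD_{n+1})\ne 0$ to guarantee that $\Omega_{\hat n+1}\ne0$ or $\Omega_{\hat n+2}\ne0$, and deduces $1<2|\Lambda||H_{\hat n+j}|$; all the work then goes into bounding $|H_{\hat n+j}|$ \emph{above} in terms of $N$, combining the defining property of $\hat n$ with the Stirling lower bound for $B_{\hat n+1}$ to invert $\hat n\mapsto \log N$ through $z(\sigma\log N)$. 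You would need to restructure your final step along these lines for the proof to go through.
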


If we wish to use elementary functions only, we can approximate the function $z(y)$ by the function $z_1(y)$ to get a more familiar looking bound.

\begin{corollary}\label{SPECIALTHEOREM}
Let $s \in \mathbb{Z}\setminus\{0\}, t\in\mathbb{Z}_{\ge 1}$ and $gcd(s,t)=1$. 
Then
\[
1<\left|e^{s/t}-\frac{M}{N}\right| c_2 \,
N^{2+2\log(|s|/\alpha)\,(1+\rho\varepsilon(N))/\log\log N}\left(\frac{\log N}{\log\log N}\right)^{2\beta+1} \,, 
\]
\[
c_2 = \frac{8\gamma^2 |s|^{2\beta +1}(1+d)^{\beta}\left(4t+d(6t+s^2)\right)}{\alpha^{2\beta + 1}}  
 +\frac{\gcd(2,s)\gamma s^{2(\beta-1)}d(d+d^2)^{\beta}}{N_2^2\alpha^{2(\beta - 1)}|e^{s/t}-1|}\,,
\]
\[
d = \left( \frac{(\log N_2)^{1/4}}{\sigma}+\beta\right) ^{-1}
\]
for all $M \in\mathbb{Z}\setminus\{0\}, N \in \mathbb{Z}_{\ge N_2}$ with
\[
\log N_2 =\max\left\{\log N_1, e^{4e}, \sigma^{-2}, \beta^2,\left(\frac{\rho(2\beta +1)}{2\log(|s|/\alpha)} \right)^{4/3} \right\}\,.
\]
\end{corollary}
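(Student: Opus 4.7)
The plan is to deduce Corollary \ref{SPECIALTHEOREM} from Theorem \ref{GENERALTHEOREM} by replacing the transcendental quantity $z(\sigma\log N)$, which appears both in the exponent of $N$ and inside $Z(N)$, with an elementary upper bound expressed in iterated logarithms. The starting point is the defining identity $z(y)\log z(y)=y$, which gives $z(y)=y/\log z(y)$. Since $z\log z$ is strictly increasing on $[1/e,\infty)$ and
\[
z_1(y)\log z_1(y)=\frac{y}{\log y}(\log y-\log\log y)=y-\frac{y\log\log y}{\log y}\le y\qquad (y\ge e),
\]
one has $z_1(y)\le z(y)$; feeding this back into $z(y)=y/\log z(y)$ yields the one-step elementary bound $z(y)\le y/(\log y-\log\log y)$.

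Next, specializing to $y=\sigma\log N$, I would write
\[
\frac{1}{\log y-\log\log y}=\frac{1}{\log\log N}\cdot\frac{1}{1-\bigl(\log\log y-\log\sigma\bigr)/\log\log N}
\]
and apply the elementary inequality $1/(1-x)\le 1+2x$ (valid for $0\le x\le 1/2$) to extract the main term $1/\log\log N$ and absorb the remainder into $\rho\,\varepsilon(N)/\log\log N$. The case split defining $\rho$ is driven by the sign of $\log\sigma$: for $\sigma\ge 1$ one has $\log\sigma\ge 0$ and the constant $7/3$ suffices, while for $\sigma<1$ the additional negative contribution $-2\log\sigma$ must be absorbed, forcing $\rho=5-2\log\sigma$. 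The constraints $\log N_2\ge e^{4e}$ and $\log N_2\ge\sigma^{-2}$ ensure that $x$ is small enough for the geometric-series estimate to go through with the stated constants. Substituting this bound directly into the exponent of $N$ in Theorem \ref{GENERALTHEOREM} reproduces the stated exponent $2+2\log(|s|/\alpha)(1+\rho\varepsilon(N))/\log\log N$.

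It remains to bound $Z(N)$ by $c_2\,(\log N/\log\log N)^{2\beta+1}$. Setting $L=\log N/\log\log N$, the previous step gives $\zeta(N)\le L(1+\rho\varepsilon(N))/\sigma+\beta$, and the constraints $\log N_2\ge\beta^2$ and $\log N_2\ge(\rho(2\beta+1)/(2\log(|s|/\alpha)))^{4/3}$ rearrange this into the cleaner forms $\zeta(N)\le L/(\sigma d)$ and $\zeta(N)+1\le L(1+d)/\sigma$, with $d$ as defined in the statement. Expanding the three $\zeta$-carrying factors $\bigl((s^2/\alpha^2)(\zeta(N)+1)\bigr)^\beta$, $\zeta(N)^\beta$ and the linear term $4t\zeta(N)+6t+s^2$ contributes precisely $2\beta+1$ powers of $L$; the numerical prefactors and the residual $1/N^2$ term (using $N\ge N_2$) combine to form the two explicit summands of $c_2$.

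The main obstacle is the careful case-split bookkeeping that produces the sharp constants $7/3$ and $5-2\log\sigma$ in the definition of $\rho$: keeping the geometric-series estimates tight while correctly handling both signs of $\log\sigma$ requires balancing $\log\sigma$, $\log\log\log N$ and $\log(\log\sigma+\log\log N)$ against each other, and verifying that each constraint on $\log N_2$ is invoked at precisely the right step. Once $\rho$ and the reformulation of $\zeta(N)$ in terms of $d$ are in place, the rest is a mechanical substitution that does not rely on any new estimate beyond Theorem \ref{GENERALTHEOREM} itself.
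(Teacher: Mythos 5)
Your overall strategy is the paper's: invoke Theorem \ref{GENERALTHEOREM}, replace $z(\sigma\log N)$ by $z_1$ plus an error of size $O(\varepsilon(N))$, split on the sign of $\log\sigma$, bound $\zeta(N)$ from below (to get $Z(N)\le c_2\zeta(N)^{2\beta+1}$) and from above (to get the $(\log N/\log\log N)^{2\beta+1}$ factor). However, two of your steps do not deliver the statement as claimed.

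First, the inequality $1/(1-x)\le 1+2x$ is both too lossy and, in one case, inapplicable. For $\sigma\ge 1$ it yields $z(\sigma\log N)\le\frac{\sigma\log N}{\log\log N}(1+2\varepsilon(N))$, whereas you need the coefficient $\rho-1=\tfrac43$ (the remaining unit of $\varepsilon(N)$ is reserved for a later absorption; see below), so $\rho=7/3$ is out of reach with the constant $2$. For $\sigma<1$ your $x=(\log\log(\sigma\log N)-\log\sigma)/\log\log N$ can exceed $\tfrac12$: the hypothesis $\log N\ge\sigma^{-2}$ only guarantees $-\log\sigma/\log\log N\le\tfrac12$, so $x$ can reach roughly $\tfrac34$, where $1/(1-x)\le 1+2x$ is false. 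The paper avoids both problems by using the exact identity $z(y)=z_1(y)\bigl(1+\frac{\log\log z(y)}{\log z(y)}\bigr)$ from Lemma \ref{inverse}, bounding $\frac{\log\log z}{\log z}\le\frac{\log\log z_1}{\log z_1}$ with $\log z_1(\sigma\log N)\ge\tfrac34\log\log N$ (resp.\ $\ge\tfrac14\log\log N$), which gives $\tfrac43\varepsilon(N)$ (resp.\ $4\varepsilon(N)$), and, for $\sigma<1$, handling the factor $1/\log(\sigma\log N)$ as a \emph{separate} geometric series in $-\log\sigma/\log\log N\le\tfrac12$. Your route could be repaired by sharpening to $1/(1-x)\le 1+\tfrac43 x$ for $x\le\tfrac14$ (valid here when $\sigma\ge1$) and $1/(1-x)\le 1+4x$ for $x\le\tfrac34$ (when $\sigma<1$), but as written the constants do not close.

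Second, you have not accounted for the factor $(1+\rho\varepsilon(N))^{2\beta+1}$. The upper bound on $\zeta(N)$ is $\frac{\log N}{\log\log N}(1+\rho\varepsilon(N))$ (the constraint $\log N\ge\beta^2$ is what absorbs the additive $\beta$ here), so $\zeta(N)^{2\beta+1}$ exceeds $(\log N/\log\log N)^{2\beta+1}$ by $(1+\rho\varepsilon(N))^{2\beta+1}$; this surplus must be pushed into the exponent of $N$, and that is precisely where the constraint $\log N\ge\bigl(\rho(2\beta+1)/(2\log(|s|/\alpha))\bigr)^{4/3}$ is used, via $(1+\rho\varepsilon(N))^{2\beta+1}\le N^{2\log(|s|/\alpha)\varepsilon(N)/\log\log N}$. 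It is also why the final exponent carries $\rho$ while the $z$-estimate only needs $\rho-1$. Your proposed reformulations ``$\zeta(N)\le L/(\sigma d)$'' and ``$\zeta(N)+1\le L(1+d)/\sigma$'' are not correct as stated (no $\sigma$ survives in the upper bound, since $\zeta(N)=z(\sigma\log N)/\sigma+\beta$ and $z(\sigma\log N)\le\sigma\log N/\log\log N\cdot(1+(\rho-1)\varepsilon(N))$); the quantity $d$ enters only through the \emph{lower} bound $\zeta(N)\ge 1/d$, which converts the additive constants inside $Z(N)$ into multiples of powers of $\zeta(N)$ and produces the two summands of $c_2$.
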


We note that there is nothing special about the constant $\rho$ that appears as a multiplier of the function $\varepsilon(N)$ above. The seemingly arbitrary choice is made just to keep $N_2$ explicit and relatively simple. For large enough $N$ any constant strictly larger than one suffices, as in fact
\[
\frac{z(\sigma \log N)\log\log N}{\sigma \log N}-1 \sim \varepsilon(N)\,.
\]
We will formalize this observation as a corollary of an asymptotic nature.
For more precision, we also investigate approximating $z(y)$ by $z_n(y)$, $n \in \mathbb{Z}_{\ge 2}$.

\begin{corollary}\label{ASYMPTOTICTHEOREM}
Let $s \in \mathbb{Z}\setminus\{0\}, t\in\mathbb{Z}_{\ge 1}$ and $gcd(s,t)=1$. \\
For any $\varepsilon_3 > 0$ there exists $N_3 \in \mathbb{Z}_{\ge 1}$ such that
\[
 1<\left|e^{s/t}-\frac{M}{N}\right|\,
N^{2+2\log(|s|/\alpha)\,(1+(1+\varepsilon_3)\varepsilon(N))/\log\log N}
\]
for all 
$M\in\mathbb{Z}\setminus\{0\}, N \in \mathbb{Z}_{\ge N_3}$.\\
For any $n \in \mathbb{Z}_{\ge 2}$ there exists $c_3 > 0$ and $N_3 \in \mathbb{Z}_{\ge 1}$ such that
\[
 1<\left|e^{s/t}-\frac{M}{N}\right|\,
N^{2+2\log(|s|/\alpha)(1/\log z_{n-1}(\log N)+c_3/(\log\log N)^2)}
\]
for all 
$M\in\mathbb{Z}\setminus\{0\}, N \in \mathbb{Z}_{\ge N_3}$.
\end{corollary}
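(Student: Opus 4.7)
The plan is to derive both parts of Corollary~\ref{ASYMPTOTICTHEOREM} from Theorem~\ref{GENERALTHEOREM} by replacing the exact quantity $z(\sigma\log N)/(\sigma\log N)$ by an elementary upper bound and then absorbing the factor $Z(N)$ into the exponent of $N$. The central analytic input is a sharpened asymptotic expansion of $z(y)$ applied at $y=\sigma\log N$.

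From the defining identity $z(y)\log z(y)=y$ I obtain $z(y)/y=1/\log z(y)$ and $\log z(y)=\log y-\log\log z(y)$; since $z(y)\to\infty$, one iteration gives $\log z(y)=\log y-\log\log y+o(1)$. Setting $y=\sigma\log N$, where $\log y=\log\log N+O(1)$ and $\log\log y=\log\log\log N+o(1)$, this yields
\[
\frac{z(\sigma\log N)\log\log N}{\sigma\log N}=\frac{\log\log N}{\log z(\sigma\log N)}=1+\varepsilon(N)+o(\varepsilon(N)).
\]
For part 1, given $\varepsilon_3>0$, I would choose $N_3$ large enough that this refines to $z(\sigma\log N)/(\sigma\log N)\le(1+(1+\varepsilon_3/2)\varepsilon(N))/\log\log N$. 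The gap between the corollary's exponent and the theorem's exponent is then at least $\Delta_N:=\log(|s|/\alpha)\,\varepsilon_3\,\log\log\log N/(\log\log N)^2$. A term-by-term inspection of $Z(N)$ gives $Z(N)=O((\log N/\log\log N)^{2\beta+1})$, so $\log Z(N)=O(\log\log N)$, whereas $\log N^{\Delta_N}$ is asymptotically a constant times $\log N\cdot\log\log\log N/(\log\log N)^2$ and therefore dominates $\log Z(N)$. Hence $N^{\Delta_N}\ge Z(N)$ for $N$ large (provided $\log(|s|/\alpha)>0$), and Theorem~\ref{GENERALTHEOREM} delivers the claim.

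For part 2, the same absorption reduces the task to showing, for each fixed $n\ge 2$, an inequality of the shape $1/\log z(\sigma\log N)\le 1/\log z_{n-1}(\log N)+c_3/(\log\log N)^2$. I would compare
\[
\log z(\sigma\log N)=\log\sigma+\log\log N-\log\log z(\sigma\log N),\qquad
\log z_{n-1}(\log N)=\log\log N-\log\log z_{n-2}(\log N),
\]
and prove by induction on $n$ that $\log z_{n-1}(\log N)-\log z(\sigma\log N)=O(1)$. The base case $n=2$ uses $\log\log z(\sigma\log N)=\log\log\log N+o(1)$; the inductive step is immediate because every $\log z_k(\log N)$ and $\log z(\sigma\log N)$ is asymptotic to $\log\log N$, so a further application of $\log\log$ introduces only a bounded error. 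Dividing by $\log z(\sigma\log N)\cdot\log z_{n-1}(\log N)\sim(\log\log N)^2$ gives the required $O(1/(\log\log N)^2)$ bound on the reciprocal difference, and since $N^{c/(\log\log N)^2}$ dominates $Z(N)$ for any $c>0$, a suitable $c_3$ covers both that difference and the absorption of $Z(N)$.

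The main obstacle is the quantitative control of the error in the expansion of $z(y)$: the $o(\varepsilon(N))$ (respectively, $O(1/(\log\log N)^2)$) slack must be genuinely smaller than the announced correction and still leave room to swallow the polylogarithmic factor $Z(N)$. A further minor subtlety concerns the degenerate case $\log(|s|/\alpha)=0$, i.e.\ $|s|\in\{1,2\}$, in which the announced correction vanishes; the absorption argument above succeeds in the range $|s|\ge 3$, where Theorem~\ref{GENERALTHEOREM}'s improvement is substantive.
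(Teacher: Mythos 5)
Your proposal is correct, and while its skeleton is the same as the paper's (apply Theorem~\ref{GENERALTHEOREM}, replace $z(\sigma\log N)/(\sigma\log N)$ by an elementary upper bound, absorb $Z(N)=O((\log N/\log\log N)^{2\beta+1})$ into the surplus in the exponent), the core estimate is obtained by a genuinely different and somewhat leaner route. The paper first proves by induction the scaling relation $z_n(\sigma\log N)=\sigma z_n(\log N)\left(1+O(1/\log\log N)\right)$ and then splits into cases: for even $n$ it uses the interlacing $z\le z_n$ from Lemma~\ref{inverse}, while for odd $n$ it must invoke the quantitative error bounds \eqref{zerror} to control $z(\sigma\log N)-z_n(\sigma\log N)$, obtaining the extra term $(1+\varepsilon_3)\log\log\log N/(\log\log N)^{n+1}$ whose $n=1$ instance yields the first part. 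You instead exploit that the only quantity needed is $z(y)/y=1/\log z(y)$, so it suffices to compare \emph{logarithms}: the identity $\log z(y)=\log y-\log\log z(y)$ gives $\log z(\sigma\log N)=\log\log N-\log\log\log N+O(1)$ directly for the first part, and the observation $\log z_{n-1}(\log N)-\log z(\sigma\log N)=O(1)$ (an easy induction, since every term is $\log\log N(1+o(1))$) converts, after dividing by the product $\sim(\log\log N)^2$, into exactly the $c_3/(\log\log N)^2$ discrepancy claimed in the second part. This bypasses Lemma~\ref{inverse}'s error estimates and the even/odd case distinction entirely, at the cost of not tracking the sharper $n$-dependent error that the paper's computation incidentally records. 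You also correctly flag that the absorption of $Z(N)$ requires $\log(|s|/\alpha)>0$, i.e.\ $|s|\ge 3$; the paper's closing remark that ``$Z(N)$ is not significant'' silently relies on the same hypothesis, and indeed for $|s|\le 2$ the stated inequality cannot hold for all large $N$, so this caveat is a genuine (if tacit) restriction on the corollary rather than a defect of your argument.
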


If $|s|=3$, then we get quite dramatic improvement to the earlier results.
As an example we consider number $e^3$. A straightforward application of Corollary \ref{SPECIALTHEOREM} would give us 
\[
1<\left|e^{3}-\frac{M}{N}\right| 1629 \,
N^{2+\log 3\,(1+8\varepsilon(N))/\log\log N}\left(\frac{\log N}{\log\log N}\right)^{3}
\]
for all $M \in\mathbb{Z}\setminus\{0\}, N \in \mathbb{Z}_{\ge 1}$ with
$\log N \geq e^{4e}\approx 52740$. However by following the proof of Corollary \ref{SPECIALTHEOREM} and using sharper bounds we obtain a better result:
\begin{corollary}\label{EKOLOME}
Let $s=3$ and $t=1$. 
Then
\begin{equation}\label{Onthebest}
1<\left|e^{3}-\frac{M}{N}\right| 1561 \,
N^{2+\log 3\,(1+4\varepsilon(N))/\log\log N}\left(\frac{\log N}{\log\log N}\right)^{3}
\end{equation}
for all $M \in\mathbb{Z}\setminus\{0\}, N \in \mathbb{Z}_{\ge 1}$ with
$\log N \geq 983$.
\end{corollary}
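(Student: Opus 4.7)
The plan is to re-run the proof of Corollary \ref{SPECIALTHEOREM} with $s = 3,\ t = 1$ substituted throughout, using at every step the tightest numerical estimate compatible with these values rather than the universal bound needed for arbitrary $s,t$. The substitution yields $\alpha = \sqrt{3}$, $\beta = 1$, $\gamma = 3$, $\sigma = 4\sqrt{3}/(9e) \approx 0.283$, $\log(|s|/\alpha) = \tfrac{1}{2}\log 3$ and, from the definition, $\rho = 5 - 2\log\sigma \approx 7.52$. The example preceding the corollary confirms that a blind substitution yields the constant $1629$, exponent multiplier $\rho \approx 7.52$ (rounded up to $8$), and threshold $\log N \ge e^{4e} \approx 52\,739$; the corollary asks me to tighten these three numbers simultaneously to $1561$, $4$, and $983$.

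The crux of the improvement is sharpening the estimate of $z(\sigma\log N)/(\sigma\log N)$ that controls the exponent in Theorem \ref{GENERALTHEOREM}. The proof of Corollary \ref{SPECIALTHEOREM} bounds this quantity by $(1 + \rho\,\varepsilon(N))/\log\log N$ with $\rho$ depending loosely on $\sigma$, and it is this cautious universal estimate that forces the safety margin $\log N \ge e^{4e}$ in the max defining $N_2$. With $\sigma$ fixed numerically I would instead expand $z(y)$ directly via $y = z\log z$ and verify, by a monotonicity argument on the function $(\log\log N)\cdot z(\sigma\log N)/(\sigma\log N)$, the explicit inequality
\[
\frac{z(\sigma\log N)}{\sigma\log N} \;\le\; \frac{1 + 4\,\varepsilon(N)}{\log\log N}, \qquad \log N \ge 983.
\]
That the same argument gives $983$ rather than $52\,739$ reflects the collapse of several $O$-constants once $\sigma$ is fixed.

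For the leading constant I would expand $Z(N)$ at $s = 3,\ t = 1$:
\[
Z(N) \;=\; 3\bigl(\zeta(N)+1\bigr)\Bigl(\tfrac{216}{\sqrt{3}}\bigl(4\zeta(N)+15\bigr)\zeta(N) \;+\; \tfrac{1}{N^{2}(e^{3}-1)}\Bigr),
\]
and then follow the same factorisation as in the proof of Corollary \ref{SPECIALTHEOREM}, pulling out $(\log N/\log\log N)^{2\beta+1} = (\log N/\log\log N)^{3}$ and using the sharper bound on $\zeta(N)$ from the previous step in place of the generic one. The small factor $(e^{3}-1)^{-1} < 1/19$ makes the second summand essentially negligible, so the coefficient reduces to about $216\sqrt{3}(1+d)(4+15d)$ with $d$ computed at the lower threshold; a careful round-up should confine this below $1561$.

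The main obstacle is the interlock among the three improvements: a smaller coefficient in front of $\varepsilon(N)$ forces a larger threshold, yet a smaller threshold enlarges the auxiliary $d$ and thereby inflates the candidate value of $c_{2}$. I therefore anticipate fixing $\log N \ge 983$ first, sharpening the exponent bound until multiplier $4$ is valid from there onwards, computing the resulting $d$ and $c_{2}$, and finally verifying that both the bound $\log N_{1}$ from \eqref{RAJAN}, which for $s = 3,\ t = 1$ is controlled by the modest $\eta \approx e/\sigma + 1$, and the secondary constraint $\log\bigl(1/|4s + 2(s-2t)(e^{s/t}-1)|\bigr)$ are comfortably below $983$.
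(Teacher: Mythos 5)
Your plan is essentially the paper's proof: substitute $s=3$, $t=1$, evaluate $Z(N)$ explicitly (your expansion of $Z(N)$ and the resulting bound $216\sqrt{3}\,(1+d)(4+15d)\zeta(N)^3$ with $\zeta(N)\ge 111$ match the paper's computation of $1561$), sharpen the estimate of $z(\sigma\log N)$ using the fixed numerical value of $\sigma$, and check that $\log N_1<983$. One bookkeeping caveat: you cannot spend the entire budget on the inequality $z(\sigma\log N)/(\sigma\log N)\le(1+4\varepsilon(N))/\log\log N$, because the replacement of $\zeta(N)^{3}$ by $(\log N/\log\log N)^{3}$ costs an additional factor $(1+O(\varepsilon(N)))^{3}$ that must also be absorbed into the exponent; the paper proves the $z$-estimate with coefficient $\tfrac{7}{2}$ (via $\log N\ge\sigma^{-5}$, $\log z_1(\sigma\log N)\ge\tfrac12\log\log N$, and a geometric-series expansion of $1/\log(\sigma\log N)$ rather than a bare monotonicity claim) and uses the remaining margin up to $4$ for that cube factor.
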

If we compare the result \eqref{Onthebest} to the earlier results we see that 
on the best the earlier investigations give a term $2\log 3$ instead of $\log 3$ in the exponent of \eqref{Onthebest}
(see the next chapter for a general analysis).

\section{Comparison to existing results}

The simple continued fraction expansions of $e^{\pm 1/t}$ and $e^{\pm 2/t}$ are well known, see e.g. \cite{mccabe}. 
They are ideal for constructing lower bounds for $|e^{\pm 1/t}-M/N|$ and $|
e^{\pm 2/t} -M/N|$, which is done by
Davis \cite{davis}, \cite{davis1979}, Dodulikov\'a et al. \cite{hanclalmostperiodic} and Tasoev \cite{tasoev} for example.
In fact, 
%we are using the simple continued fractions too: 
all the convergents of \eqref{GENCONTFRACEXP} evaluated at $z = 1/t$ and $z = 2/t$ are also convergents of the simple continued fraction 
expansions of $e^{1/t}$ and $e^{2/t}$, respectively \cite{mccabe} 
(they cover asymptotically $1/3$ and $3/5$ of all the simple continued fraction convergents, respectively). 
However, our approach is intended for the general case, using the triangle inequality instead of approximation properties of the simple 
continued fractions, and thus we cannot match existing results when $|s| \le 2$.

When $|s| \ge 3$, the simple continued fraction expansions of $e^{s/t}$ are unavailable.
We then have $1 < \alpha < |s|$, which means that the common factor we found in the numerators and the denominators 
of our approximations is enough to give us an advantage over existing results.
Note also, that before extracting common factors we have $\alpha=1,\ \beta=0,\ \gamma=1$ in our Theorem \ref{GENERALTHEOREM}.
This will be a starting  platform for presenting already existing results.

Bundschuh \cite{bundschuh1979} used Pad\'e approximation technique to prove that
\[
1 \le \left| e^{s/t}-\frac{M}{N} \right|c_BN^{2+4(\log|s|)\,w(\log N)/\log N}\,w(\log N),
\quad w(x)=\frac{x+\delta}{\log (x+\delta)}
\]
for all $M\in\mathbb{Z}\setminus\{0\}, N \in \mathbb{Z}_{\ge 1}$,
where $c_B>0$ and $\delta>0$ were explicitly given. Bundschuh's result applies to any $N\ge 1$, 
while Corollary \ref{SPECIALTHEOREM} for instance requires $N$ to be rather big. 
On the other hand, Corollary \ref{SPECIALTHEOREM} has better asymptotic behaviour because of having the constant 
$2\log(|s|/\alpha)$ instead of $4\log|s|$ in the exponent.

Shiokawa \cite{shiokawa} used the representation \eqref{GENCONTFRACEXP} to show that
there exists a constant $c_S > 0$ such that
\begin{equation}\label{SHIOKAWA1}
1 \le \left| e^{s/t}-\frac{M}{N} \right| c_S N^{2+2(\log|s|) z(\tau \log N)/(\tau \log N)}\frac{\log N}{\log\log N}\,,
\end{equation}
where $\tau = 4t/(es^2)$,
for $M\in\mathbb{Z}\setminus\{0\}, N \in \mathbb{Z}_{\ge 3}$ 
(we modified the statement of Shiokawa to use our notation $z(y)$ and to apply also for negative $s$).
Note also that before extracting common factors (meaning $\alpha=1,\ \beta=0,\ \gamma=1$) our Theorem \ref{GENERALTHEOREM}
gives Shiokawa's result \eqref{SHIOKAWA1} with a completely explicit constant in place of $c_S$.
Further, if $|s|> 1$, then $\alpha>1$ and $\tau < \sigma$,
giving an improvement to \eqref{SHIOKAWA1} despite $\beta$ being positive.

Using Pad\'e approximants just below the diagonal, Zheng \cite{zheng} proved that for any 
$\varepsilon_Z > 0$ there exists $N_Z \in \mathbb{Z}_{\ge 1}$  such that
\begin{equation}\label{ZHENG}
1 \le \left|e^{s/t} - \frac{M}{N} \right|\frac{8te^{2|s|/t}}{s^2} N^{2+(2+\varepsilon_Z)(\log|s|)/\log\log N} \frac{\log N}{\log\log N}
\end{equation}
for $M\in\mathbb{Z}\setminus\{0\}, N \in \mathbb{Z}_{\ge N_Z}$.
As the relation between $\varepsilon_Z$ and $N_Z$ is not explicitly given, 
the statement \eqref{ZHENG}  is in fact equivalent after omitting the factors $8te^{2|s|/t}/s^2$ and $\log N / \log\log N$.
Again, when $\alpha > 1$ and because of the sharper treatment of $\varepsilon_3$, 
our Corollary \ref{ASYMPTOTICTHEOREM} is asymptotically better than \eqref{ZHENG} already with $n=1$.

Other effective results on the rational approximation problem include e.g. Mahler \cite{mahler} and Nesterenko et al. \cite{neswal}, 
neither of which has general bounds with the same precision as we do.

\section{Some facts about generalized continued fractions} 

By a generalized continued fraction we mean the expression
\begin{equation}\label{GC}
b_0+\frac{a_1}{b_1+\frac{a_2}{b_2+\cdots}}\,,
\end{equation}
for which we use standard notation
\[
b_0+\K_{n=1}^{\infty}\frac{a_n}{b_n}=
b_0+\frac{a_1}{b_1} \alaplus \frac{a_2}{b_2} \alaplus \aladots \,.
\]
The convergents of \eqref{GC} are defined by
\[
\frac{A_n}{B_n} = b_0+ \K_{k=1}^n \frac{a_k}{b_k}=b_0+\frac{a_1}{b_1}\alaplus \frac{a_2}{b_2}
\alaplus \aladots \alaplus \frac{a_n}{b_n} \,,
\]
where the numerators $A_n$ and denominators $B_n$ both satisfy the recurrence formula
\begin{equation}\label{ABRECURRENCES}
C_n = b_n C_{n-1} + a_n C_{n-2}
\end{equation}
for $n \in \mathbb{Z}_{\ge 2}$
with initial values $A_0=b_0$, $B_0=1$, $A_1=b_0b_1+a_1$ and $B_1=b_1$. 
By the value of \eqref{GC} we mean the limit
\[
\tau = \lim_{n\to\infty} \frac{A_n}{B_n} \,,
\]
when it exists.
Using recurrence formula \eqref{ABRECURRENCES} and induction gives
\[
\frac{A_{k+1}}{B_{k+1}}-\frac{A_k}{B_k}=
\frac{(-1)^k a_1a_2\cdots a_{k+1}}{B_kB_{k+1}}
\]
for all $k \in \mathbb{Z}_{\ge 0}$,
which by telescoping implies
\[
\frac{A_n}{B_n}=b_0+\sum_{k=0}^{n-1}\frac{(-1)^ka_1a_2\cdots a_{k+1}}{B_kB_{k+1}}\,.
\]
Supposing the convergence the limit
\[
\tau=b_0+\sum_{k=0}^{\infty}\frac{(-1)^ka_1a_2\cdots a_{k+1}}{B_kB_{k+1}}
\]
exists, which further implies
\[
\tau-\frac{A_n}{B_n}=\sum_{k=n}^{\infty}\frac{(-1)^ka_1a_2\cdots a_{k+1}}{B_kB_{k+1}}\,.
\]
This we may use for example in the following way.
If $a_k$ and $b_k$ are positive for $k \in \mathbb{Z}_{\ge 1}$, then we can use the standard error estimate of an alternating sum:
\begin{equation}\label{REMAINDER}
\left|\tau- \frac{A_n}{B_n}\right|<\frac{a_1a_2\cdots a_{n+1}}{B_{n}B_{n+1}} \quad\text{or}\quad
\left|B_{n}\tau- A_{n}\right|<\frac{a_1a_2\cdots a_{n+1}}{B_{n+1}}\,.\\
\end{equation}

\section{Proofs} 

The exponential function 
$e^{z}$ is expressible as a generalized continued fraction
\[
e^{z}=1+\frac{2z}{2-z} \alaplus \frac{z^2}{6} \alaplus \frac{z^2}{10} \alaplus \frac{z^2}{14}
\alaplus\aladots 
= 1+\frac{2z}{2-z+\K_{n=1}^{\infty} \frac{z^2}{4n+2}}
\]
for all $z\in\mathbb{C}$. 
When $z=s/t\in\Q$, $s\in\mathbb{Z}\setminus\{0\}$ and $t\in\mathbb{Z}_{\ge 1}$, we get
\begin{equation}\label{EXPKETJU}
e^{s/t}=1+\frac{2s}{2t-s} \alaplus \frac{s^2}{6t} \alaplus \frac{s^2}{10t}
\alaplus \frac{s^2}{14t} \alaplus \aladots 
=1+\frac{2s}{2t-s+E(s,t)}\,,
\end{equation}
where
\[
E(s,t)=\K_{n=1}^{\infty}\frac{a_n}{b_n}\,,\qquad a_n=s^2\,,\quad b_n=2t(2n+1) \,.
\]
The recurrence formula \eqref{ABRECURRENCES} for numerators $A_n$ and denominators $B_n$ of the convergents of $E(s,t)$ looks like
\begin{equation}\label{Cnrecurrence}
C_n = 2t(2n+1)C_{n-1}+s^2C_{n-2}
\end{equation}
for $n \in \mathbb{Z}_{\ge 2}$
with initial values $A_0=0$, $B_0=1$, $A_1=s^2$ and $B_1=6t$. Our proof for the main theorem
also uses two auxiliary sequences $(C^\pm_n)_{n \ge 0}$ defined by
\[
C_n^{\pm} = A_n+(2t\pm s)B_n\,.
\]
Clearly the numbers $C^\pm_n$, $n \ge 2$ satisfy the recurrence formula \eqref{Cnrecurrence} too
with initial values $C^\pm_0 = 2t\pm s$ and $C^\pm_1 = 12t^2\pm6ts+s^2 = 6tC^\pm_0+s^2$.

For a prime number $p$ and a natural number $n$ define the $p$-adic order of $n$ in the usual way:
\[
v_p(n) =
\begin{cases}
\max\{v \in \mathbb{Z}_{\ge 0} \colon p^v \mid n \}\,, \quad & n \ge 1\,, \\
\infty\,, \quad & n = 0\,.
\end{cases}
\] 
The function $v_p$ has the properties
\[
v_p(mn) = v_p(m) + v_p(n)
\]
and
\begin{equation} \label{nonarch}
v_p(m+n)
\begin{cases}
\ge \min\{v_p(m), v_p(n)\}\,, \quad & v_p(m) = v_p(n)\,, \\
= \min\{v_p(m), v_p(n)\}\,, \quad & v_p(m) \neq v_p(n)\,,
\end{cases}
\end{equation}
the non-Archimedean triangle inequality. We will need Legendre's theorem on $p$-adic orders of factorials:
\begin{theorem}[Legendre \cite{legendre}]\label{legendre}
Let $n = a_0+a_1p+\cdots+a_m p^m$, where $a_k \in \{0, 1, \ldots, p-1\}$ for $k=0,1,\dots,m$ and $a_m\ge 1$. Then
\[
v_p(n!) = \frac{n-s_p(n)}{p-1}\,,
\]
where
\[
s_p(n) = a_0+a_1+\cdots + a_m.
\]
\end{theorem}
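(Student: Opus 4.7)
The plan is to build the proof on the classical identity
\[
v_p(n!) = \sum_{k \ge 1} \left\lfloor \frac{n}{p^k} \right\rfloor,
\]
which I would justify first by a standard counting argument: each $j \in \{1, \ldots, n\}$ contributes $v_p(j)$ to the left-hand side and is counted on the right exactly once for every $k$ with $p^k \mid j$, hence $v_p(j)$ times in total.

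Next I would plug in the base-$p$ expansion $n = \sum_{j=0}^{m} a_j p^j$. Because $0 \le a_j \le p-1$, the partial sum $a_0 + a_1 p + \cdots + a_{k-1}p^{k-1}$ is strictly less than $p^k$, so the floor evaluates cleanly as $\lfloor n/p^k \rfloor = \sum_{j=k}^{m} a_j p^{j-k}$ for $1 \le k \le m$, and it vanishes for $k > m$. Substituting this into the identity produces a finite double sum indexed by the triangle $1 \le k \le j \le m$; interchanging the order of summation and evaluating the inner geometric series reduces the whole expression to $\sum_{j=1}^{m} a_j (p^j - 1)/(p-1)$.

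The final step is purely algebraic: adjoining the vanishing term $j = 0$ to extend the summation and splitting $a_j(p^j - 1) = a_j p^j - a_j$ separates the numerator into $n - s_p(n)$, which yields the claimed formula. I do not expect a real obstacle; the only point requiring genuine care is the floor evaluation in the second step (a one-line consequence of $a_j \le p-1$), and the rest is bookkeeping on finite sums.
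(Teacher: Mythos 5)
Your proof is correct. Note that the paper does not prove this statement at all: it is quoted as a classical theorem with a citation to Legendre's \emph{Th\'eories des nombres}, so there is no in-paper argument to compare against. Your derivation is the standard one: the de Polignac identity $v_p(n!) = \sum_{k \ge 1} \lfloor n/p^k \rfloor$ via the counting argument, the clean evaluation $\lfloor n/p^k \rfloor = \sum_{j=k}^{m} a_j p^{j-k}$ (justified by $a_0 + a_1 p + \cdots + a_{k-1}p^{k-1} < p^k$), the interchange of the double sum over the triangle $1 \le k \le j \le m$, and the geometric series giving $\sum_{j} a_j (p^j-1)/(p-1) = (n - s_p(n))/(p-1)$. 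All steps check out.
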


Before the proof of our main result let us give two Lemmas on the greatest common divisor
of $C_n^+$ and $C_n^-$.

\begin{lemma}\label{ISOTEKIJA}
\[
D_n = \prod_{\substack{p \in \mathbb{P} \\ v_p(s) \ge 1}} p^{r_p(n)} \,\bigg|\, \gcd(C^+_n, C^-_n) \,,
\]
where
\[
\begin{cases}
r_2(n) = n+1\,, &\\
r_p(n) = v_p((n+1)!) \,, \quad & p \ge 3\,.
\end{cases}
\]
\end{lemma}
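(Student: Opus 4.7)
My approach is to derive an explicit closed form for $C_n^\pm$ and then extract the desired $p$-adic divisibility term by term. Setting $m=n+1$, I would show by induction on $n$, using the recurrence \eqref{Cnrecurrence} and the initial values $C_0^\pm = 2t\pm s$ and $C_1^\pm = 6tC_0^\pm + s^2$, that
\[
C_n^\pm = \sum_{k=0}^{m} \frac{(2m-k)!}{(m-k)!\,k!}(\pm s)^k t^{m-k}.
\]
After clearing the common factor $(2m-k-2)!/((m-k)!\,k!)$, the induction step on the coefficient of $(\pm s)^k t^{m-k}$ collapses to the polynomial identity $(2m-k)(2m-k-1) = 2(2m-1)(m-k) + k(k-1)$; the boundary cases $k\in\{0,1,m-1,m\}$ are checked separately but present no difficulty. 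Alternatively one can derive the formula by identifying $C_n^\pm$ with $t^{n+1}$ times the diagonal Pad\'e numerator/denominator polynomials of $e^z$ evaluated at $z=s/t$.

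Now fix $p\mid s$, write $v=v_p(s)\ge 1$, and note that $\gcd(s,t)=1$ forces $v_p(t)=0$. Rewriting $(2m-k)!/((m-k)!\,k!) = \binom{2m-k}{m}\,m!/k!$, the $k$-th summand has $p$-adic valuation
\[
v_p\!\left(\binom{2m-k}{m}\right) + v_p(m!) + kv - v_p(k!).
\]
When $p\ge 3$, Legendre's Theorem \ref{legendre} gives $v_p(k!)\le k/(p-1) \le k \le kv$, so this valuation is at least $v_p(m!) = r_p(n)$. The non-Archimedean triangle inequality \eqref{nonarch} then yields $v_p(C_n^\pm)\ge r_p(n)$.

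The case $p=2$ (non-trivial only when $2\mid s$) is the subtle one because $r_2(n)=m$ strictly exceeds $v_2(m!) = m - s_2(m)$. Substituting Legendre's formula throughout, the $k$-th summand's $2$-adic valuation simplifies to
\[
m + k(v-1) + s_2(k) + s_2(m-k) - s_2(2m-k),
\]
so it is at least $m$ as soon as $s_2(k)+s_2(m-k) \ge s_2(2m-k)$. The decomposition $2m-k = k + 2(m-k)$ combined with $s_2(2y)=s_2(y)$ and the subadditivity $s_2(a+b)\le s_2(a)+s_2(b)$ (whose defect equals the number of binary carries) supplies exactly this inequality.

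I expect the main obstacle to be establishing the closed-form identity cleanly; once it is in hand, the $p\ge 3$ case is routine, but the $p=2$ bound is delicate and rests on the carry-counting observation above to extract the extra $s_2(m)$ units of $2$-divisibility that $r_2(n)$ demands beyond $v_2((n+1)!)$.
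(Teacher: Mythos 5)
Your proof is correct. The closed form you propose is, after the re-indexing $k\mapsto m-k$, exactly the formula
$C_n^\pm=\sum_{k=0}^{n+1}\frac{(n+1+k)!}{k!(n+1-k)!}\,t^k(\pm s)^{n+1-k}$
that the paper establishes by the same induction on \eqref{Cnrecurrence}, and your coefficient identity $(2m-k)(2m-k-1)=2(2m-1)(m-k)+k(k-1)$ checks out. For odd $p\mid s$ your term-by-term estimate is essentially the paper's argument verbatim: the paper bounds $v_p((n+1-k)!)\le (n+1-k)/(p-1)$ against $(n+1-k)v_p(s)$, which is your $v_p(k!)\le k/(p-1)\le kv$ in the other indexing. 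The genuine divergence is at $p=2$. The paper does not use the closed form there at all; it runs a short separate induction directly on the recurrence, namely $v_2(C_0^\pm)\ge 1$, $v_2(C_1^\pm)\ge 2$ and
$v_2(C_n^\pm)\ge\min\{v_2(C_{n-1}^\pm)+1,\;v_2(C_{n-2}^\pm)+2v_2(s)\}\ge n+1$.
You instead push the summand-by-summand valuation through: the expression $m+k(v-1)+s_2(k)+s_2(m-k)-s_2(2m-k)$ is the correct $2$-adic valuation lower bound for the $k$-th term by Theorem \ref{legendre}, and the needed inequality $s_2(2m-k)\le s_2(k)+s_2(m-k)$ does follow from $2m-k=k+2(m-k)$, $s_2(2y)=s_2(y)$ and the subadditivity of $s_2$ (integrality of binomial coefficients, or Kummer's carry count). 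Your route is more uniform, handling every prime from the one closed form via \eqref{nonarch}, and it yields the slightly stronger fact that each individual summand is divisible by $2^{n+1}$; the paper's induction for $p=2$ is shorter and avoids digit-sum combinatorics entirely. Either argument is acceptable.
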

\begin{proof}
We show by induction that
\begin{equation}\label{explicit}
C^\pm_n = \sum_{k=0}^{n+1} \frac{(n+1+k)!}{k!(n+1-k)!}t^k(\pm s)^{n+1-k}\,.\nonumber
\end{equation}
First of all, with $n=0$ and $n=1$ we end up with the required initial values $C^\pm_0 = 2t\pm s$
and $C^\pm_1 = 12t^2 \pm 6ts+s^2$. For $n \ge 2$ we use the recurrence formula \eqref{Cnrecurrence}:
\begin{align*}
C^\pm_n & = 2t(2n+1)C^\pm_{n-1}+s^2C^\pm_{n-2} \\ 
&= 2t(2n+1)\sum_{k=0}^n \frac{(n+k)!}{k!(n-k)!}t^k(\pm s)^{n-k} + s^2 \sum_{k=0}^{n-1}\frac{(n-1+k)!}{k!(n-1-k)!}t^k(\pm s)^{n-1-k} \\
& =  \sum_{k=1}^{n+1} \frac{2(2n+1)(n-1+k)!}{(k-1)!(n+1-k)!}t^k (\pm s)^{n+1-k}
+ \sum_{k=0}^{n-1} \frac{(n-1+k)!}{k!(n-1-k)!}t^k(\pm s)^{n+1-k} \\
& = \frac{2(2n+1)(2n)!}{n!0!}t^{n+1}(\pm s)^0 + \frac{2(2n+1)(2n-1)!}{(n-1)!1!}t^n(\pm s)^1 \\
& \qquad \qquad + \sum_{k=1}^{n-1} \left(\frac{2(2n+1)(n-1+k)!}{(k-1)!(n+1-k)!}+\frac{(n-1+k)!}{k!(n-1-k)!} \right)
t^k (\pm s)^{n+1-k} \\
& \qquad \qquad \qquad \qquad + \frac{(n-1)!}{0!(n-1)!}t^0(\pm s)^{n+1} \\
& = \sum_{k=0}^{n+1} \frac{(n+1+k)!}{k!(n+1-k)!}t^k (\pm s)^{n+1-k}\,.
\end{align*}
Let $p$ be an odd prime dividing $s$, and thus not dividing $t$.
Using the non-Archimedean triangle inequality \eqref{nonarch} gives
\begin{align*}
v_p(C^\pm_n) & \ge \min_{0 \le k \le n+1} \left\{ v_p\left(\frac{(n+1+k)!}{k!(n+1-k)!}t^k(\pm s)^{n+1-k}\right)\right\} \\
&= \min_{0 \le k \le n+1} \left\{ v_p\left((n+1)!\binom{n+1+k}{k}t^k\frac{(\pm s)^{n+1-k}}{(n+1-k)!}\right)\right\} \\
&= \min_{0 \le k \le n+1} \bigg\{ v_p((n+1)!)+v_p\left(\binom{n+1+k}{k}\right)
\\ & \qquad \qquad \qquad \qquad \qquad \qquad +v_p(t^k)+v_p\left(\frac{(\pm s)^{n+1-k}}{(n+1-k)!}\right)\bigg\}\\
& \ge v_p((n+1)!) + \min_{0 \le k \le n+1} \left\{ (n+1-k)\left(v_p(s)-\frac{1}{p-1}\right)\right\}
\\ & = v_p((n+1)!)\,,
\end{align*}
where in the penultimate step we used Legendre's theorem \ref{legendre} to estimate 
\[
v_p((n+1-k)!)\le\frac{n+1-k}{p-1}\,.
\]

If $v_2(s) \ge 1$, then \[v_2(C^\pm_0)=v_2(2t\pm s)\geq 1\] and \[v_2(C^\pm_1)=v_2(6tC^\pm_0+s^2)\geq 2\,.\] If we assume that $v_2(C_k^\pm)\geq k+1$ for all $k<n$, then
\begin{align*}
v_2(C^\pm_n)&=v_2(2t(2n+1)C^\pm_{n-1}+s^2C^\pm_{n-2})
\\ & \ge \min\{v_2(2t(2n+1)C^\pm_{n-1}),v_2(s^2C^\pm_{n-2})\}\\
&=\min\{v_2(C^\pm_{n-1}) + 1, v_2(C^\pm_{n-2}) + 2v_2(s) \}
\\ &\ge \min\{n+ 1, n-1 + 2 \}=n+1\,.
\end{align*}
Hence by induction $v_2(C^\pm_n)\ge n+1$ for all $n\ge 0$.
\end{proof}

\begin{lemma}
\[
\frac{\alpha^{n+1}}{\gamma(n+1)^\beta}\le D_n \le \gcd(2,s)\alpha^n \,.
\]
\end{lemma}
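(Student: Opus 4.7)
The plan is to take logarithms throughout and reduce both inequalities to elementary bounds on the base-$p$ digit sum $s_p(n+1)$, using Legendre's theorem \ref{legendre}. For each odd prime $p\mid s$, Legendre's formula gives
\[
r_p(n) = \frac{n+1-s_p(n+1)}{p-1}\,,
\]
while for $p=2$ the definition supplies the slightly larger value $r_2(n)=n+1$. These two expressions together account for both the $\gcd(2,s)$ factor in the upper bound and the absence of any $p=2$ correction in the lower bound.

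For the upper bound I would use the trivial estimate $s_p(n+1)\ge 1$, which gives $r_p(n)\le n/(p-1)$ for every odd prime $p\mid s$. This matches exactly the $p$-exponent of $\alpha^n$, so the odd part of $D_n$ is at most the odd part of $\alpha^n$. When $2\nmid s$ we have $\gcd(2,s)=1$ and the bound is immediate; when $2\mid s$, the $2$-part of $D_n$ is $2^{n+1}$ versus $2^n$ for $\alpha^n$, and the extra factor of $2$ is supplied precisely by $\gcd(2,s)=2$.

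For the lower bound the key ingredient is the digit-count estimate
\[
s_p(n+1) \le (p-1)\bigl(\lfloor\log_p(n+1)\rfloor+1\bigr)\,,
\]
valid since $n+1$ has at most $\lfloor\log_p(n+1)\rfloor+1$ base-$p$ digits, each bounded by $p-1$. Rearranging yields
\[
r_p(n)\log p \ge (n+1)\frac{\log p}{p-1} - \log(n+1) - \log p
\]
for every odd $p\mid s$, while for $p=2$ (when $2\mid s$) the exact identity $r_2(n)\log 2 = (n+1)\log 2$ holds. Summing over all $p\mid s$ produces
\[
\log D_n \ge (n+1)\log\alpha - \beta\log(n+1) - \log\gamma\,,
\]
which is the desired inequality.

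The only mild subtlety I expect is the bookkeeping of the parity split: $\beta$ and $\gamma$ were defined to exclude $p=2$, so the $(n+1)^\beta$ and $\gamma$ error factors in the lower bound must arise entirely from the odd primes dividing $s$. This is consistent with the observation that the $p=2$ contribution $2^{n+1}$ to $D_n$ matches the $2$-part $2^{n+1}$ of $\alpha^{n+1}$ exactly, with no $\log(n+1)$ penalty, so that the $p=2$ term genuinely drops out of the error analysis.
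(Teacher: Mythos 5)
Your proposal is correct and follows essentially the same route as the paper: both treat $p=2$ separately via the exact exponent $r_2(n)=n+1$, use $s_p(n+1)\ge 1$ in Legendre's formula for the upper bound, and use the digit-count estimate $s_p(n+1)\le (p-1)(\lfloor\log_p(n+1)\rfloor+1)$ for the lower bound. The bookkeeping of $\beta$ and $\gamma$ over the odd primes matches the paper's argument exactly.
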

\begin{proof}
If $v_2(s) \ge 1$ then $2$ contributes to $D_n$ by $2^{n+1}$ as shown in Lemma \ref{ISOTEKIJA}.

Let $p$ be an odd prime dividing $s$. Let $n+1 = a_0+a_1p+\cdots+a_m p^m$, $a_k \in \{0,1,\ldots, p-1\}$ for $k=0,1,\ldots,m$ and $a_m \ge 1$.
Estimating 
\[
m \le \frac{\log(n+1)}{\log p}
\]
and $1 \le s_p(n+1) \le (m+1)(p-1)$ in Legendre's theorem \ref{legendre} yields
\[
\frac{n+1}{p-1}-\frac{\log(n+1)}{\log p} -1 \le v_p((n+1)!) \le \frac{n}{p-1}\,.
\]
The claim follows.
\end{proof}

\begin{proof}[Proof of Theorem~{\rm\ref{GENERALTHEOREM}}]
From the convergents of $E(s,t)$ we get a sequence of linear forms 
\begin{equation}\label{SIMULTA1} 
R_n = B_n E(s,t)-A_n\,. 
\end{equation}
First we note that the remainder $R_n$ may be estimated by property \eqref{REMAINDER}, namely
\[
|R_n|<\frac{a_1a_2\cdots a_{n+1}}{B_{n+1}}\,.
\]
By \eqref{EXPKETJU} we have
\[
E(s,t)=\frac{2s}{e^{s/t}-1}+s-2t\,,
\]
which substituted into \eqref{SIMULTA1} implies
\[
(1-e^{s/t})R_n=C_n^{-}e^{s/t}-C_n^{+}\,, \qquad C_n^{\pm} = A_n+(2t\pm s)B_n\,.
\]
Lemma \ref{ISOTEKIJA} tells that
\[
\frac{C_n^{\pm}}{D_n}\in\mathbb{Z}\,,\quad n\in\mathbb{Z}_{\ge 0}\,.
\]
By denoting
\[
L_n =\frac{(1-e^{s/t})R_n}{D_n}\,,\qquad J_n=\frac{C_n^{+}}{D_n}\,,\qquad H_n=\frac{C_n^{-}}{D_n}\,,
\]
we get new linear forms
\[
L_n = H_ne^{s/t}-J_n\,, \qquad J_n\,,\, H_n\in\mathbb{Z}\,.
\]

Fix now $M\in\mathbb{Z}$ and $N\in\mathbb{Z}_{\ge 1}$.
Our target is to give a lower bound for the linear form 
\[
\Lambda = Ne^{s/t}-M \,.
\]
Denote also
\[
\Omega_n = NJ_n - MH_n = H_n\Lambda - NL_n\in\mathbb{Z}\,.
\]
If $\Omega_n \ne 0$, then by triangle inequality we get
\begin{equation}\label{KIIKKU}
1 \le |\Omega_n|\le |H_n||\Lambda|+N|L_n|\,.
\end{equation}
Since $L_n \rightarrow 0$ and $1\leq 2N|L_0|$ by \eqref{RAJAN},
there exists the biggest $\hat n\in\mathbb{Z}_{\ge 0}$ such that
\begin{equation}\label{fixhatn}
\frac{1}{2}\le N\left|L_{\hat n}\right|\,.
\end{equation}
By definition
\begin{equation}\label{fixhatn2}
N|L_k|<\frac{1}{2}
\end{equation}
for all $k\ge\hat n+1$. 
Since 
\[
J_nH_{n+1}-J_{n+1}H_n= \frac{2s}{D_nD_{n+1}}\left(A_{n+1}B_n-A_nB_{n+1}\right)=
\frac{(-1)^n2s^{2n+3}}{D_nD_{n+1}}\neq0
\]
for all $n\in\mathbb{Z}_{\ge 0}$, we see that $\Omega_{\hat n+1}\ne 0$ or $\Omega_{\hat n+2}\ne 0$. 
Hence we get from \eqref{KIIKKU} and \eqref{fixhatn2} the estimate
\begin{equation}\label{pihvi}
1<2|\Lambda||H_{\hat n+j}|\, ,
\end{equation}
where $j=1$ or $j=2$. Now we need to find an upper bound  for $|H_{\hat n+j}|$ depending on $N$. 
By noting that
\begin{align*}
H_{\hat n+j}&=\frac{C_{\hat n+j}^{-}}{D_{\hat n+j}}=
\frac{A_{\hat n+j}+(2t-s)B_{\hat n+j}}{D_{\hat n+j}}=
\frac{E(s,t)B_{\hat n+j}-R_{\hat n+j}+(2t-s)B_{\hat n+j}}{D_{\hat n+j}}\\
&=\frac{1}{D_{\hat n+j}}\left(\frac{2s}{e^{s/t}-1}B_{\hat n+j}-R_{\hat n+j}\right)\\ & =
\frac{1}{D_{\hat n+j}}\left(\frac{2s}{e^{s/t}-1}
\left(b_{\hat n+j}B_{\hat n+j-1}+a_{\hat n+j}B_{\hat n+j-2}\right)-R_{\hat n+j}\right)
\end{align*}
we are led to the estimate
\begin{align}\label{LEDESTIMATE}
|H_{\hat n+j}|&\le
\frac{1}{D_{\hat n+j}}\left(\frac{2|s|}{|e^{s/t}-1|}
(b_{\hat n+j}+a_{\hat n+j})B_{\hat n+j-1}+|R_{\hat n+j}|\right) \nonumber \\ 
&\le\frac{1}{D_{\hat n+1}}\left(\frac{2|s|}{|e^{s/t}-1|}
(b_{\hat n+2}+a_{\hat n+2})B_{\hat n+1}+|R_{\hat n+j}|\right).
\end{align}
Next we turn to estimation of the terms $B_{\hat n+1}$ and $|R_{\hat n+j}|$.
By \eqref{fixhatn} and \eqref{REMAINDER} we get 
\[
\frac{1}{2N}\le |L_{\hat n}|=
\left|\frac{(1-e^{s/t})R_{\hat n}}{D_{\hat n}}\right|<
\frac{|e^{s/t}-1|s^{2(\hat n+1)}}{D_{\hat n}B_{\hat n+1}}\,,
\]
and so we have
\begin{equation}\label{BHATTU2}
B_{\hat n+1}\le  
\frac{2N|e^{s/t}-1|s^{2(\hat n+1)}}{D_{\hat n}} \,.
\end{equation}
Further, by \eqref{fixhatn2} we have
\begin{equation}\label{RHATTU2}
|R_{\hat n+j}|= \left| \frac{D_{\hat n+j}L_{\hat n+j}}{1-e^{s/t}}\right|
<\frac{D_{\hat n+2}}{2N|e^{s/t}-1|}\,.
\end{equation}
Thus, combining \eqref{LEDESTIMATE} with \eqref{BHATTU2} and \eqref{RHATTU2} yields to
\begin{align}
|H_{\hat n+j}| & \le
\frac{1}{D_{\hat n+1}}\bigg(\frac{2|s|}{|e^{s/t}-1|}
(2t(2{\hat n}+5)+s^2)\frac{2N|e^{s/t}-1|s^{2(\hat n + 1)}}{D_{\hat n}}
\\ & \qquad \qquad\qquad\qquad\qquad\qquad\qquad\qquad\qquad\qquad+\frac{D_{\hat n+2}}{2N|e^{s/t}-1|}\bigg) \notag \\
& =
\frac{4Ns^{2(\hat n+1)}|s|(4t(\hat n + 1)+6t+s^2)}{D_{\hat n}D_{\hat n+1}}
+\frac{D_{\hat n+2}}{2N D_{\hat n+1} |e^{s/t}-1|} \,.\label{LEDESTIMATE2}
\end{align}

On the other hand, by the recurrence relation \eqref{ABRECURRENCES}, 
Stirling's formula 
\[
\Gamma(x+1) = \sqrt{2\pi}x^{x+1/2}e^{-x+\theta(x)/(12x)}\,, \qquad  0<\theta(x)<1\,,\quad x > 0\,,
\]
(see e.g. \cite{stirling}) and the fact that $\Gamma(3/2)=\sqrt{\pi}/2$ 
we deduce the lower bound
% Tähän saa parannuksen mutta ei maksa vaivaa: pois jätetyt a_(n-2)*B_n:t tuo tulokseen exp(c/n),
% mikä on kokoluokkaa Stirlingin kaavan virhe, eli päätetty jättää omaan arvoonsa.
\begin{align}
B_{\hat n+1}&=
b_{\hat n+1}B_{\hat n}+a_{\hat n+1}B_{\hat n-1}
>b_{\hat n+1}B_{\hat n}>\cdots>
\prod_{k=1}^{\hat n+1}b_k=\prod_{k=1}^{\hat n+1} 2t(2k+1) \notag \\
&=(4t)^{\hat n+1}\prod_{k=1}^{\hat n+1}(k+\frac{1}{2})=(4t)^{\hat n+1}\frac{\Gamma(\hat n+\frac{5}{2})}{\Gamma(\frac{3}{2})}  \\ &> 2\sqrt{2}(\hat n+\tfrac{3}{2})^{\hat n+2}(4t)^{\hat n + 1}
e^{-(\hat n+3/2)}\,. \label{BHATTU3}
\end{align}
The bounds \eqref{BHATTU2} and \eqref{BHATTU3} together imply
\begin{align*}
\log N
&> \log(2\sqrt{2}) + (\hat n + 2)\log(\hat n+ \tfrac{3}{2}) + (\hat n + 1)\log(4t) -(\hat n + \tfrac{3}{2}) \\
& \qquad -\log 2 -\log|e^{s/t}-1|-2(\hat n + 1)\log s+ \log D_{\hat n} \\
&> \log(2\sqrt{2}) + (\hat n + 2)\log(\hat n+\tfrac{3}{2}) + (\hat n + 1)\log(4t) -(\hat n + \tfrac{3}{2}) \\
& \qquad -\log 2 -\log|e^{s/t}-1|-2(\hat n + 1)\log s\notag \\ & \qquad+ (\hat n+1)\log \alpha -\beta\log(\hat n+1)-\log \gamma\,.
\end{align*}
In order to get rid of the non-significant terms we assume $\hat n >\beta-1$ and make technical estimates $\log(\hat n + 3/2) > \log(\hat n + 1 - \beta)$ and $\log(\hat n + 3/2) > \log(\hat n + 1)$ in a specific proportion and continue
\begin{align}
\log N & > (\hat n +1 - \beta) \left(\log (\hat n + 1 - \beta)+\log(4t)-1-\log(s^2)+\log \alpha  \right) \notag \\
& \qquad + \log(2\sqrt{2}) + \log(\hat n+ \tfrac{3}{2}) + \beta\log(4t)-\beta -\tfrac{1}{2}-\log 2  \notag \\
& \qquad-\log|e^{s/t}-1|-\beta\log{s^2} +\beta\log \alpha -\log \gamma \notag \\
&  = (\hat n+1-\beta) \log \left( \frac{4t\alpha(\hat n+1-\beta)}{es^2} \right)
+ \log \left( \frac{\sqrt2 (4t\alpha)^\beta (\hat n + \frac{3}{2})}{e^{\beta +1/2}|e^{s/t}-1| s^{2\beta}\gamma } \right)
\label{twoterms} \,.
\end{align}
Suppose now that
\begin{equation}\label{hatnlus1p}
\hat n +1\ge \frac{e^{\beta +1/2}|e^{s/t}-1| s^{2\beta}\gamma}{\sqrt2 (4t\alpha)^\beta} - \frac{1}{2}\,.
\end{equation}
Then the second term in \eqref{twoterms} is non-negative and with the notation
\[
\sigma = \frac{4t\alpha}{es^2}
\]
we have
\[
\sigma \log N > \sigma (\hat n +1 - \beta)\log(\sigma (\hat n + 1 - \beta))\,.
\]
Suppose further that
\begin{equation}\label{invehto}
\sigma(\hat n+1-\beta) \ge e,
\end{equation} 
thereby we can use the inverse function $z(y)$ 
of the function $y(z)=z\log z$ and get
\[
\sigma (\hat n+1-\beta) < z(\sigma \log N)\,.
\]
Then
\begin{equation}\label{HATUNYLARAJA}
\hat n+1<\frac{z(\sigma\log N)}{\sigma}+\beta.
\end{equation}

Our assumptions $\hat n>\beta-1$, \eqref{hatnlus1p} and \eqref{invehto} are valid when
\begin{equation}\label{hatnlusplusp}
\hat n+1\ge \eta=\max\left\{\frac{\sqrt{e}|e^{s/t}-1|\gamma }{\sqrt2\sigma^{\beta}} - \frac{1}{2},\frac{e}{\sigma}+\beta\right\}\,.\nonumber
\end{equation}
If $\hat n+1<\eta$ then by the assumption \eqref{RAJAN} the upper bound \eqref{HATUNYLARAJA} holds anyway. Hence by the estimate \eqref{LEDESTIMATE2} we get
\begin{align*}
|H_{\hat n+j}|&\le
\frac{4Ns^{2(\hat n+1)}|s|(4t(\hat n + 1)+6t+s^2)}{D_{\hat n}D_{\hat n+1}}
+\frac{D_{\hat n+2}}{2N D_{\hat n+1} |e^{s/t}-1|} \\ 
& \le
N\left(\frac{s^2}{\alpha^2} \right)^{\hat n +1}\!\!
\bigg( 4\alpha^{-1}((\hat n +1)(\hat n + 2))^\beta \gamma^2 |s|(4t(\hat n + 1)+6t+s^2) \\
&\qquad\qquad 
+\left(\frac{\alpha^2}{s^2}\right)^{\hat n +1}\!\!\frac{\gcd(2,s)(\hat n+2)^\beta \gamma}{2N^2 |e^{s/t}-1|} \bigg)\\ 
& \le
N\!\left(\frac{s^2}{\alpha^2} \right)^{z(\sigma \log N)/\sigma+\beta}\!\!\! \bigg(
4\alpha^{-1}\!\left(\!\big(\frac{z(\sigma \log N)}{\sigma} +\beta\big)\!\big(\frac{z(\sigma \log N)}{\sigma} +\beta+1\big)\!\right)^{\!\beta}  \\
&\qquad \qquad \qquad \quad \times \gamma^2 |s|(4t\big(\frac{z(\sigma \log N)}{\sigma}  + \beta\big)+6t+s^2)
\\
&\qquad\qquad 
+\frac{\gcd(2,s)\alpha^2\gamma}{2N^2 s^2|e^{s/t}-1|}\big(\frac{z(\sigma \log N)}{\sigma} +\beta+1\big)^\beta\bigg)
\\
&\le \tfrac{1}{2} N^{1+2\log(|s|/\alpha)z(\sigma\log N)/(\sigma\log N)} Z(N)\,.
\end{align*}
The result then follows from \eqref{pihvi}.
\end{proof}

Before proving the corollaries we present some properties of the inverse function $z(y)$ of the function
$y(z)= z \log z$, $z \geq 1/e$.

\begin{lemma}\label{inverse}
The inverse function $z(y)$ of the function
$y(z)= z \log z$, $z \geq 1/e$,
is strictly increasing. 
Define $z_0(y)=y$ and $z_n(y)=y/\log z_{n-1}(y)$ for $n\in\mathbb{Z}_{\ge 1}$. 
If $y>e$, then
\[
z_1<z_3<\cdots <z<\cdots <z_2<z_0
\]
and the inverse function may be given by the infinite nested logarithm fraction
\[ 
z(y) = \underset{n\to\infty}{\lim} z_{n}(y)=\frac{y}{\log \frac{y}{\log \frac{y}{\log \cdots}}}\,. 
\]
For the error made when approximating $z(y)$ by $z_n(y)$ we have
\begin{equation} \label{zerror}
\begin{cases}
z(y) - z_0(y) & = y\left(\dfrac{1}{\log z(y)} - 1\right)\,, \\
z(y) - z_1(y) & = \dfrac{z_1(y) \log\log z(y)}{\log z(y)} \,, \\
|z(y) - z_n(y)| & \le \dfrac{ (\log y)^{\lfloor n/2 \rfloor} z_1(y) \log\log z(y)}{(\log z_1(y))^{n-1}(\log z(y))^{\lfloor n/2 \rfloor + 1}} \,, \qquad n \in \mathbb{Z}_{\ge 2} \,.
\end{cases}
\end{equation}
\end{lemma}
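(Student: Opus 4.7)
The plan is to tackle the lemma in four stages: monotonicity of $z$, the nested inequality chain, convergence of the iteration, and finally the error bounds in \eqref{zerror}. Monotonicity is immediate since $y'(z)=\log z+1\ge 0$ on $[1/e,\infty)$ with strict inequality for $z>1/e$, so $y(z)=z\log z$ is strictly increasing with a strictly increasing inverse. Because $y(e)=e$, the hypothesis $y>e$ translates to $z>e$ and hence $\log z>1$.

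For the interlacing I would introduce the auxiliary map $T(w)=y/\log w$, which is strictly decreasing in $w$, has $z$ as its unique fixed point in $[e,\infty)$, and satisfies $z_n=T(z_{n-1})$. Since $z<y=z_0$ (from $z=y/\log z$ with $\log z>1$), applying $T$ flips the inequality: $z_1=T(z_0)<T(z)=z$, then $z_2=T(z_1)>z$, and so on. A routine induction then produces $z_1<z_3<\cdots<z<\cdots<z_2<z_0$. To ensure every $z_n$ remains in $[e,\infty)$ so that the recursion stays in the monotone regime, I would verify $z_1=y/\log y>e$ for $y>e$; this follows from $z_1(e)=e$ and $(y/\log y)'=(\log y-1)/(\log y)^2>0$ on $(e,\infty)$. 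Every subsequent $z_n$ then lies between $z_1$ and $z_0$.

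Convergence follows because $(z_{2k})$ and $(z_{2k+1})$ are monotone and bounded. Writing their limits as $a\ge z\ge b$, the relations $T(a)=b$ and $T(b)=a$ give $a\log b=y=b\log a$, equivalently $(\log a)/a=(\log b)/b$. Since $x\mapsto(\log x)/x$ is strictly decreasing on $[e,\infty)$ and both $a,b\ge e$, we conclude $a=b=z$, yielding the nested-logarithm representation.

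For \eqref{zerror}, parts (a) and (b) are direct manipulations of $z=y/\log z$ and $z_1=y/\log y$:
\[
z-z_0=\frac{y}{\log z}-y=y\Big(\frac{1}{\log z}-1\Big),\qquad z-z_1=\frac{y(\log y-\log z)}{\log z\,\log y}=\frac{z_1\log\log z}{\log z}.
\]
For part (c) the engine is the recursion
\[
|z-z_n|=\frac{y\,|\log z_{n-1}-\log z|}{\log z\,\log z_{n-1}}\le\frac{y\,|z-z_{n-1}|}{\min(z,z_{n-1})\,\log z\,\log z_{n-1}},
\]
to which I would apply a parity split: for odd $n$ we have $z_{n-1}\ge z$ so $\min=z$ and $y/(z\log z)=1$, giving $|z-z_n|\le|z-z_{n-1}|/\log z_{n-1}$; for even $n$ we have $z_{n-1}\le z$ with $z_{n-1}\ge z_1$, so $\min\ge z_1$ and $y/z_1=\log y$, giving $|z-z_n|\le(\log y)|z-z_{n-1}|/(\log z\,\log z_{n-1})$. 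Iterating these two ratios starting from (b) and downgrading each $\log z_k$ appearing in a denominator to $\log z_1$ produces precisely the factor $(\log y)^{\lfloor n/2\rfloor}/((\log z)^{\lfloor n/2\rfloor+1}(\log z_1)^{n-1})$ multiplying $z_1\log\log z$. The main obstacle is this last bookkeeping step: one must separate the two parities cleanly so that the $\log y$ factors accumulate only on even steps, while at the same time downgrading every $\log z_k$ denominator to $\log z_1$ without damaging the exponent count.
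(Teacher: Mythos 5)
Your proposal is correct and follows essentially the same route as the paper: the identities for $n=0,1$ are the same algebraic manipulations of $z\log z=y$ and $\log y=\log z+\log\log z$, and your case (c) recursion with the parity split (using $\log u\le u-1$, $y/(z\log z)=1$ on odd steps and $y/z_1=\log y$ on even steps, then downgrading $\log z_{n-1}$ to $\log z_1$) is exactly the paper's argument and yields the stated exponents. The only difference is that the paper simply cites an earlier reference for the monotonicity, interlacing and convergence statements, which you instead prove directly with the standard decreasing-fixed-point-map argument; that part of your write-up is also sound.
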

\begin{proof}
Everything but the errors \eqref{zerror} is proven in \cite{hancl1}.
The case $n=0$ is immediate, and the case $n=1$ is true as  
\begin{align*}
z(y)& =\frac{y}{\log z(y)}= \frac{y\log y}{\log y \log z(y) }
=\frac{y  \log (z(y)\log z(y))}{\log y \log z(y)}
 \\ & =\frac{y(\log z(y) + \log\log z(y))}{\log y\log z(y)} 
  =z_1(y)\left(1+\frac{\log\log z(y)}{\log z(y)}\right)\,.
\end{align*}
When $n \ge 2$ is odd, we can bound
\begin{align*}
0 & <  z(y) -z_n(y)  = \frac{y}{\log z(y)} - \frac{y}{\log z_{n-1}(y)}
= \frac{y \log (z_{n-1}(y) /z(y))}{\log z(y) \log z_{n-1}(y)} \\
& \le  \frac{y (z_{n-1}(y) /z(y) - 1)}{\log z(y) \log z_{n-1}(y)}
=  \frac{y (z_{n-1}(y) - z(y))}{z(y)\log z(y) \log z_{n-1}(y)} \\ &
=  \frac{z_{n-1}(y) - z(y)}{\log z_{n-1}(y)} \le \frac{z_{n-1}(y) - z(y)}{\log z_1(y)}\,.
\end{align*}
When $n \ge 2$ is even, we only get
\begin{align*}
0 & <  z_n(y)-z(y)   = \frac{y}{\log z_{n-1}(y)}-\frac{y}{\log z(y)}
= \frac{y \log (z(y) /z_{n-1}(y))}{\log z(y) \log z_{n-1}(y)} \\
& \le  \frac{y (z(y) /z_{n-1}(y) - 1)}{\log z(y) \log z_{n-1}(y)}
=  \frac{y (z(y) - z_{n-1}(y))}{z_{n-1}(y)\log z(y) \log z_{n-1}(y)} \\ &
\le \frac{y(z(y) - z_{n-1}(y))}{z_1(y)\log z(y)\log z_1(y)} = \frac{\log y(z(y) - z_{n-1}(y))}{\log z(y)\log z_1(y)}\,.
\end{align*}
The claim follows.
\end{proof}

\begin{proof}[Proof of Corollary~{\rm\ref{SPECIALTHEOREM}}]
Suppose $M \in \mathbb{Z}\setminus\{0\}$ and $N \in \mathbb{Z}_{\ge N_2}$.

If $\sigma \ge 1$, then
\[
\log z_1(\sigma \log N) \ge \log\log N - \log\log\log N\,.
\]
Since $\log N \ge e^{4e}$, which also implies
\begin{equation} \label{apuepis}
4\log\log\log N \le \log\log N\,,
\end{equation}
we now have
\begin{equation} \label{logz1pos}
\log z_1(\sigma \log N) \ge \tfrac{3}{4} \log\log N \,.
\end{equation}
Then by Lemma \ref{inverse} and the fact that $\log x /x$ is strictly decreasing when $x>e$ we get
\[
\frac{\log\log z(\sigma \log N)}{\log z(\sigma \log N)}\le
\frac{\log\log z_1(\sigma \log N)}{\log z_1(\sigma \log N)} \le
\frac{\log(\tfrac{3}{4}\log\log N)}{\tfrac{3}{4}\log \log N}
\le \tfrac{4}{3} \varepsilon(N)\,.
\]
Hence by Lemma \ref{inverse} we have
\begin{align}
z(\sigma \log N) &=
\frac{\sigma \log N}{\log \left( \sigma \log N\right)} \left( 1+ \frac{\log\log z(\sigma \log N)}{\log z(\sigma \log N)} \right) \notag 
\\ &  \le \frac{\sigma \log N}{\log \log N}\left( 1+ \tfrac{4}{3} \varepsilon(N)\right)\,.
\label{ztoz1pos}
\end{align}

Then suppose that $\sigma < 1$.
The assumption $\log N \ge \sigma^{-2}$ is equivalent with inequality $\log(\sigma \log N) \ge \log\log N/2$.
Using \eqref{apuepis} we can estimate
\begin{equation} \label{logz1}
\log z_1(\sigma \log N) \ge \tfrac{1}{2}\log\log N - \log\log\log N \ge \tfrac{1}{4} \log\log N \,,
\end{equation}
which is also implied by \eqref{logz1pos} when $\sigma \ge 1$.
As before we get
\begin{equation}\label{epsilon}
\frac{\log\log z(\sigma \log N)}{\log z(\sigma \log N)}\le
\frac{\log\log z_1(\sigma \log N)}{\log z_1(\sigma \log N)} \le
\frac{\log(\tfrac{1}{4}\log\log N)}{\tfrac{1}{4}\log \log N}
\le 4 \varepsilon(N)\,.
\end{equation}
The assumption $\log N\ge e^{4e}$ implies $\log\log\log N \ge 2$ and the assumption $\log N \ge \sigma^{-2}$ implies $-\log \sigma/\log\log N \le 1/2$.
By Lemma \ref{inverse} and inequality \eqref{epsilon}
\begin{align}
z(\sigma \log N) &=
\frac{\sigma \log N}{\log \left( \sigma \log N\right)} \left( 1+ \frac{\log\log z(\sigma \log N)}{\log z(\sigma \log N)} \right) \notag 
\\ &  \le
\frac{\sigma \log N}{\log\log N} \left( 1+ 4\varepsilon(N) \right) \left( 1+\frac{-\log \sigma}{\log\log N}+\left( \frac{-\log \sigma}{\log\log N}\right)^2 + \cdots \right)
\notag \\ & \le
\frac{\sigma \log N}{\log\log N} \left( 1+ 4\varepsilon(N) \right)\left( 1-\frac{2\log \sigma}{\log\log N}\right)\notag \\ & =
\frac{\sigma \log N}{\log\log N} \left( 1- \frac{2\log \sigma}{\log\log N}+4\varepsilon(N) -\frac{8\varepsilon(N)\log \sigma}{\log\log N}\right) \notag \\ & \le
\frac{\sigma \log N}{\log\log N} \left( 1+ (4-2\log \sigma)\varepsilon(N) \right)\,. \label{ztoz1neg}
\end{align}

Altogether \eqref{ztoz1pos} and \eqref{ztoz1neg} can be summarized as
\begin{equation}\label{ztoz1}
z(\sigma \log N) \le \frac{\sigma \log N}{\log\log N} \left(1+(\rho-1) \epsilon(N)\right)\,.
\end{equation}

Because $\eta-\beta\ge e/\sigma$, we have
\[
\log N\geq  (\eta-\beta) \log\left(\sigma(\eta-\beta)\right)\ge \frac{e}{\sigma}\,.
\]
Thus
Lemma \ref{inverse} and inequality \eqref{logz1} yield
\begin{align*}
\zeta(N)& =\frac{z(\sigma \log N)}{\sigma}+\beta \ge \frac{z_1(\sigma \log N)}{\sigma}+\beta 
\ge \frac{(\log N)^{1/4}}{\sigma}+\beta \\ &\ge \frac{(\log N_2)^{1/4}}{\sigma}+\beta=\frac{1}{d}\,.
\end{align*}
Now
\begin{align}
Z(N)&  =
\frac{8\gamma^2 |s|(4t\zeta(N)+6t+s^2)}{\alpha}\left(\frac{s^2}{\alpha^2} \zeta(N)(\zeta(N)+1)\right)^\beta \notag \\ & \qquad\qquad\qquad\qquad
+\frac{\gcd(2,s)\alpha^2\gamma}{N^2 s^2|e^{s/t}-1|}\left(\frac{s^2}{\alpha^2}(\zeta(N)+1)\right)^\beta \notag \\
&\le \zeta(N)^{2\beta+1} \bigg(\frac{8\gamma^2 |s|^{2\beta +1}(1+d)^{\beta}\left(4t+d(6t+s^2)\right)}{\alpha^{2\beta + 1}} \notag \\ & \qquad\qquad\qquad\qquad\qquad\qquad
+\frac{\gcd(2,s)\gamma s^{2(\beta-1)}d(d+d^2)^{\beta}}{N_2^2\alpha^{2(\beta - 1)}|e^{s/t}-1|}\bigg)
\notag \\ &
 = c_2 \zeta(N)^{2\beta+1} \,. \label{Z}
\end{align}
Inequality \eqref{apuepis} implies 
\begin{equation}\label{valivaihe}
(\log N)^{1/2}\log\log N\leq \frac{\log N}{\log\log N}=\varepsilon(N)\frac{\log N}{\log\log\log N}\, .
\end{equation}
Since $\log N\geq \beta^2$ and $\log\log\log N> 1$, \eqref{valivaihe} gives
$\beta \log\log N < \log N \varepsilon(N)$. By \eqref{ztoz1} we have
\begin{align}
\zeta(N) & = \frac{z(\sigma \log N)}{\sigma} +\beta < \frac{\log N}{\log\log N}
\left(1+(\rho - 1)\varepsilon(N) \right)+\beta \notag \\ & <
\frac{\log N}{\log\log N}
\left(1+\rho\varepsilon(N) \right)\,. \label{zeta}
\end{align}
Finally since 
\[
\log N \ge \left(\frac{\rho(2\beta +1)}{2\log(|s|/\alpha)} \right)^{4/3}
\]
and since \eqref{apuepis} implies $(\log N)^{3/4}<\log N/\log\log N$, we can estimate
\begin{align*}
(2\beta+1)\log(1+\rho\varepsilon(N))&<(2\beta+1)\rho\varepsilon(N)\\ &=(2\beta+1)\rho\varepsilon(N)(\log N)^{3/4}(\log N)^{-3/4}\\
&<(2\beta+1)\rho\varepsilon(N)\frac{\log N}{\log\log N}\left(\left(\frac{\rho(2\beta +1)}{2\log(|s|/\alpha)} \right)^{4/3}\right)^{-3/4}\\
&<2\log(|s|/\alpha)\varepsilon(N)\frac{\log N}{\log\log N}
\end{align*}
and get
\begin{equation}\label{epsiloncontrol}
\left(1+\rho\varepsilon(N)\right)^{2\beta +1} \le N^{2\log(|s|/\alpha)\varepsilon(N)/\log\log N}\,.
\end{equation}

Using Theorem \ref{GENERALTHEOREM} and inequalities \eqref{ztoz1}, \eqref{Z}, \eqref{zeta}
and then \eqref{epsiloncontrol} gives
\begin{align*}
1&<\left|e^{s/t}-\frac{M}{N}\right|Z(N)\,
N^{2+2\log(|s|/\alpha)z(\sigma\log N)/(\sigma\log N)}\\
&<\left|e^{s/t}-\frac{M}{N}\right| c_2 \,
N^{2+2\log(|s|/\alpha)(1+(\rho - 1)\varepsilon(N))/\log\log N} \left(\frac{\log N}{\log\log N}\left( 1+\rho\varepsilon(N)\right)\right)^{2\beta+1} \\ &
<\left|e^{s/t}-\frac{M}{N}\right| c_2 \,
N^{2+2\log(|s|/\alpha)(1+\rho\varepsilon(N))/\log\log N}\left(\frac{\log N}{\log\log N}\right)^{2\beta+1}\,,
\end{align*}
which concludes the proof.
\end{proof}

\begin{proof}[Proof of Corollary~{\rm\ref{ASYMPTOTICTHEOREM}}]
Let's show by induction that
\begin{equation}\label{sigmapois}
z_n(\sigma \log N) = \sigma z_n(\log N)\left(1+O\left(\frac{1}{\log\log N}\right)\right)\,.\nonumber
\end{equation}
With $n=0$ there is nothing to prove. For $n \ge 1$ we use Lemma \ref{inverse} to estimate
\[
\log z_{n-1}(\log N) > \log z_1(\log N) = \log\log N - \log\log\log N
\]
and 
to calculate
\begin{align*}
z_n(\sigma \log N) & = \frac{\sigma \log N}{\log z_{n-1}(\sigma \log N)} = \frac{\sigma \log N}{\log \left( \sigma z_{n-1}(\log N)\left(1+O\left(\frac{1}{\log\log N}\right) \right)\right)} \\
& = \frac{\sigma \log N}{\log\sigma +\log z_{n-1}(\log N) + O\left(\frac{1}{\log\log N} \right)}\\ & = \frac{\sigma \log N}{\log z_{n-1}(\log N) + O(1)} \\
& = \frac{\sigma \log N}{\log z_{n-1}(\log N)\left(1 + O\left(\frac{1}{\log z_{n-1}(\log N)}\right)\right)} \\ &= \frac{\sigma \log N}{\log z_{n-1}(\log N)\left(1 + O\left(\frac{1}{\log\log N}\right)\right)} \\
& = \frac{\sigma \log N}{\log z_{n-1}(\log N)}\left(1 + O\left(\frac{1}{\log\log N}\right)\right) \\ & =\sigma  z_n(\log N)\left(1 + O\left(\frac{1}{\log\log N}\right)\right) \,.
\end{align*}
In other words, if $n$ is even then there exist a constant $c_3 > 0$ and an integer $N_3 \in \mathbb{Z}_{\ge 1}$ such that
\begin{equation}\label{even}
\frac{z(\sigma \log N)}{\sigma \log N} \le 
\frac{z_n(\sigma \log N)}{\sigma \log N} \le\frac{1}{\log z_{n-1}(\log N)}+\frac{c_3}{(\log\log N)^2}
\end{equation}
for all $N \ge N_3$.

With odd $n$ we also need to consider the errors made in the approximation of $z(y)$ by $z_n(y)$.
For starters,
\[
\frac{\log\log z(\sigma \log N)}{\log z(\sigma \log N)} = \frac{\log\log\log N}{\log\log N}(1+o(1))\,.
\]
Because
\begin{align*}
\log (\sigma \log N)(1+o(1)) & = \log\log N \left(1+\frac{\log \sigma}{\log\log N}\right)(1+o(1))
\\ & = \log\log N (1+o(1))\,,
\end{align*}
\begin{align*}
z_1(\sigma \log N)(1+o(1)) & = \frac{\sigma \log N(1+o(1))}{ \log\log N \left(1+\frac{\log \sigma}{\log\log N}\right)} \\ & =
\frac{\sigma \log N(1+o(1))}{ \log\log N \left(1+O\left(\frac{1}{\log\log N}\right)\right)} 
\\ & = \frac{\sigma \log N(1+o(1))\left(1+O \left(\frac{1}{\log\log N}\right)\right)}{ \log\log N} \\ & =  \frac{\sigma \log N(1+o(1))}{ \log\log N}
\end{align*}
and
\begin{align*}
\frac{1+o(1)}{\log z_1(\sigma \log N)} & = \frac{1+o(1)}{\log \sigma + \log \log N - \log\log(\sigma \log N)} \\ & = \frac{1+o(1)}{\log\log N \left(1+O\left(\frac{\log\log\log N}{\log\log N}\right) \right)} \\ & = \frac{(1+o(1))\left(1+O\left(\frac{\log\log\log N}{\log\log N}\right) \right)}{\log\log N }\\ & = \frac{1+o(1)}{\log\log N }\,,
\end{align*}
we have
\begin{align*}
z(\sigma \log N)-z_n(\sigma \log N) & \le \frac{ (\log (\sigma \log N))^{\lfloor n/2 \rfloor} z_1(\sigma \log N) \log\log z(\sigma \log N)}{(\log z_1(\sigma \log N))^{\lfloor 3n/2\rfloor-1}\log z(\sigma \log N)}  \\
&  = \frac{\sigma \log N \log\log\log N (1+o(1))}{(\log\log N)^{n+1}} 
\end{align*}
by Lemma \ref{inverse}. So if $n$ is odd then for any constant $\varepsilon_3 > 0$ there exists a constant $c_3 > 0$ and an integer $N_3 \in \mathbb{Z}_ {\ge 1}$ such that
\begin{align}
\frac{z(\sigma \log N)}{\sigma \log N} & 
= \frac{z_n(\sigma \log N)}{\sigma \log N} +\frac{z(\sigma \log N)-z_n(\sigma \log N)}{\sigma \log N} \notag \\ & 
\le  \frac{1}{\log z_{n-1}(\log N)}+\frac{c_3}{(\log\log N)^2}+\frac{(1+\varepsilon_3)\log\log\log N}{(\log\log N)^{n+1}} \label{odd}
\end{align}
when $N \ge N_3$. Which of the two error terms is dominant depends on whether $n = 1$ or $n \ge 3$.

To conclude the proof we use inequalities \eqref{even} and \eqref{odd} to Theorem \ref{GENERALTHEOREM}, noting that because
\[
Z(N) = O\left(\zeta(N)^{2\beta + 1}\right) = O\left(\left(\frac{\log N}{\log\log N}\right)^{2\beta +1}\right)
\]
then
\[
\frac{\log Z(N)}{\log N} = O\left(\frac{\log\log N}{\log N}\right)
= O\left(\frac{1}{(\log\log N)^2}\right)\,,
\]
and hence the function $Z(N)$ is not significant.
\end{proof}

\begin{proof}[Proof of Corollary \ref{EKOLOME}]
Now we have the values $\alpha=\sqrt{3}$, $\beta=1$, $\gamma=3$, $\sigma=4\sqrt{3}/(9e)<1$ and $\log N_1= 982.40529\ldots<983$. Let $M \in \mathbb{Z}\setminus\{0\}$ and $N \in \mathbb{Z}_{\ge 1}$ with $\log N\geq 983$. 
Because $\log N\geq 983>\sigma^{-5}$, we can estimate
\[
\log z_1(\sigma \log N) \ge \tfrac{4}{5}\log\log N - \log\log\log N\,.
\]
Since $\log\log N \ge \log 983 >2e$, we have
\begin{equation} \label{apuepis2}
\varepsilon(N)=\frac{\log\log\log N}{\log\log N}<\frac{\log\log 983}{\log 983}<\frac{3}{10}
\end{equation}
and hence
\begin{equation} \label{logz12}
\log z_1(\sigma \log N) \ge \tfrac{1}{2} \log\log N >e\,.\nonumber
\end{equation}
Then
\begin{equation}\label{epsilon2}
\frac{\log\log z(\sigma \log N)}{\log z(\sigma \log N)}\le
\frac{\log\log z_1(\sigma \log N)}{\log z_1(\sigma \log N)} \le
\frac{\log(\tfrac{1}{2}\log\log N)}{\tfrac{1}{2}\log \log N}
\le 2 \varepsilon(N)\,.
\end{equation}
Now $\log\log\log N \ge \log\log 983$ and $-\log \sigma/\log\log N \le 1/5$ so by Lemma \ref{inverse} and inequality \eqref{epsilon2}
\begin{align}
z(\sigma \log N) &=
\frac{\sigma \log N}{\log \left( \sigma \log N\right)} \left( 1+ \frac{\log\log z(\sigma \log N)}{\log z(\sigma \log N)} \right) \notag 
\\  & \le
\frac{\sigma \log N}{\log\log N} \left( 1+ 2\varepsilon(N) \right)\left( 1-\frac{5\log \sigma}{4\log\log N}\right)\notag \\  & \le
\frac{\sigma \log N}{\log\log N} \left( 1+ 2\varepsilon(N) \right)\left( 1-\frac{5\log \sigma}{4\log\log 983}\varepsilon(N)\right)\notag \\& \le
\frac{\sigma \log N}{\log\log N} \left( 1+ \tfrac{9}{10}\varepsilon(N)+2\varepsilon(N) +\tfrac{9}{5}\varepsilon(N)^2\right) \notag \\ & \le
\frac{\sigma \log N}{\log\log N} \left( 1+ \tfrac{7}{2}\varepsilon(N) \right)\,. \label{ztoz12}
\end{align}
Then
\begin{align*}
\zeta(N) &  \ge \frac{z_1(\sigma \log N)}{\sigma}+\beta 
\ge \frac{(\log N)^{1/2}}{\sigma}+\beta  \ge \frac{\sqrt{983}}{\sigma}+1\ge 111\,
\end{align*}
and
\begin{align}
Z(N)
&\le \zeta(N)^{3} \bigg(\frac{8\cdot 9 \cdot 27(1+1/111)(4+15/111)}{3\sqrt{3}}+\frac{3(1+1/111)}{e^{2\cdot982}111^2(e^3-1)}\bigg) \notag\\
&\leq 1561\zeta(N)^{3} \,. \label{booboo}
\end{align}
Inequality \eqref{apuepis2} implies 
\begin{equation}\label{valivaihe2}
(\log N)^{2/5}\log\log N< \frac{\log N}{\log\log N}=\varepsilon(N)\frac{\log N}{\log\log\log N}\, .
\end{equation}
Since $\log N\geq 983$ then \eqref{valivaihe2} gives
$30\log\log N < \log N \varepsilon(N)$. Then by \eqref{ztoz12} we have
\begin{align}
\zeta(N) & = \frac{z(\sigma \log N)}{\sigma} +\beta < \frac{\log N}{\log\log N}
\left(1+\tfrac{7}{2}\varepsilon(N) \right)+1 \notag \\ & <
\frac{\log N}{\log\log N}
\left(1+(\tfrac{7}{2}+\tfrac{1}{30})\varepsilon(N) \right)<
\frac{\log N}{\log\log N}
\left(1+\tfrac{11}{3}\varepsilon(N) \right)\,. \label{zeta2}
\end{align}
Finally since \eqref{apuepis2} implies $(\log N)^{7/10}<\log N/\log\log N$ we can estimate
\begin{align*}
3\log\left(1+\tfrac{11}{3}\varepsilon(N)\right)&<11\varepsilon(N)=11\varepsilon(N)\log 3(\log N)^{7/10} (\log 3)^{-1}(\log N)^{-7/10}\\
&<11 (\log 3)^{-1} 983^{-7/10}\varepsilon(N)\log 3\frac{\log N}{\log\log N}\\
&<\tfrac{1}{10}\varepsilon(N)\log 3\frac{\log N}{\log\log N}
\end{align*}
and get
\begin{equation}\label{epsiloncontrol2}
\left(1+\tfrac{11}{3}\varepsilon(N)\right)^{3} \le N^{\log3\varepsilon(N)/(10\log\log N)}\,.
\end{equation}
Using Theorem \ref{GENERALTHEOREM} and inequalities \eqref{ztoz12}, \eqref{booboo}, \eqref{zeta2}
and then \eqref{epsiloncontrol2} gives
\begin{align*}
1&<\left|e^{s/t}-\frac{M}{N}\right|Z(N)\,
N^{2+2\log(|s|/\alpha)z(\sigma\log N)/(\sigma\log N)}\\
&<\left|e^{s/t}-\frac{M}{N}\right| 1561 \,
N^{2+\log 3(1+7/2\varepsilon(N))/\log\log N}\left(\frac{\log N}{\log\log N}\left( 1+\tfrac{11}{3}\varepsilon(N)\right)\right)^{3} \\ &
<\left|e^{s/t}-\frac{M}{N}\right|  1561 \,
N^{2+\log3(1+4\varepsilon(N))/\log\log N}\left(\frac{\log N}{\log\log N}\right)^{3}\,,
\end{align*}
which is what had to be proved.
\end{proof}

   Kalle Lepp\"al\"a\\   
   Bioinformatics Research Centre,\\
   Aarhus University
   C.F. M{\o}llers All\'e 8
   DK-8000 Aarhus C\\
   Denmark \\
   kalle.m.leppala@gmail.com\\
  
   Tapani Matala-aho and Topi T\"orm\"a\\
   Mathematics,\\    
   University of Oulu, P.O. Box 3000, 90014 Oulu\\
   Finland 
   tapani.matala-aho@oulu.fi\\
   topi.torma@oulu.fi\\
   
\end{document}